\numberwithin{equation}{section}
\newcommand{\calA}{{\mathcal A}}
\newcommand{\calE}{{\mathcal E}}
\newcommand{\TAU}{\psi}
\newcommand{\PHI}{\varphi}
\newcommand{\XX}{{\mathbb X}}
\newcommand{\YY}{{\mathbb Y}}
\newcommand{\ZZ}{{\mathbb Z}}
\newcommand{\SSS}{{\mathbb S}}
\newcommand{\UU}{{\mathbb U}}
\newcommand{\VV}{{\mathbb V}}
\newcommand{\II}{{\mathbb I}} 
\newcommand{\sR}{{\mathbb R}}
\newcommand{\sC}{{\mathbb C}}
\newcommand{\sN}{{\mathbb N}}
\newcommand{\eps}{\varepsilon}
\newcommand{\la}{\lambda}
\newcommand{\citimes}{\circledast}
      \newtheorem{theorem}{Theorem}[section]
       \newtheorem{proposition}[theorem]{Proposition}
       \newtheorem{corollary}[theorem]{Corollary}
       \newtheorem{lemma}[theorem]{Lemma}
\newtheorem{claim}{Claim}
\theoremstyle{remark}
       \newtheorem{remark}{Remark}[section]
\theoremstyle{definition}
\newtheorem{definition}{Definition}[section]
\author{Marek Bo\.zejko}
\thanks{\noindent Research partially supported by  the Taft Research Center, KBN Grant No 1 PO3A 01330, and   NSF
grant \#DMS-0504198}
\address{Instytut Matematyczny\\ Uniwersytet Wroc{\l}awski\\ Pl. Grunwaldzki 2/4
50-384 Wroc{\l}aw, Poland.
}
\email{bozejko@math.uni.wroc.pl}
\author{
W{\l}odzimierz  Bryc
}
\address{
Department of Mathematics,
University of Cincinnati,
PO Box 210025,
Cincinnati, OH 45221--0025, USA}
\email{Wlodzimierz.Bryc@UC.edu}
\date{
Printed: \today.   File: {\tt\jobname.TEX}}
\keywords{generalized two-state freeness, generalized free Meixner distribution,
Laha-Lukacs theorem, noncommutative
quadratic regression}
\subjclass[2000]{Primary: 46L53; Secondary: 60E05, 05A18}
\title[Quadratic regression and CLT for two-state algebras]{A quadratic regression problem for two-state algebras with application
to the central limit theorem }
\begin{document}
\maketitle

\begin{abstract} We extend the free version \cite{Bozejko-Bryc-04}  of the Laha-Lukacs theorem
to probability spaces with two-states.
We then use this result to
generalize
the noncommutative central limit theorem  of Kargin \cite{Kargin-07} to the two-state
setting.
\end{abstract}

\section{Introduction}

Both classical and free Meixner distributions  first appeared in the theory of orthogonal polynomials in the works of Meixner  \cite{Meixner-40}, Anshelevich \cite{Anshelevich01}, and Saitoh and Yoshida \cite{Saitoh-Yoshida01}. Morris \cite{Morris82} pointed out the  relevance of classical Meixner distributions for the theory of exponential families in statistics;  Diaconis, Khare and Saloff-Coste \cite{Diaconis-08} gave an excellent overview of state of the art.  Ismail and May \cite{Ism:May} analyzed a mathematically equivalent problem from the point of view of approximation operators.  A counterpart of (some aspects of)  this theory for free Meixner distributions appear in an  unpublished manuscript by Bryc and Ismail \cite{Bryc-Ismail-05} and in  \cite{Bryc-06-08}.

Laha and Lukacs  \cite{Laha-Lukacs60} characterized all the (classical) Meixner distributions using a quadratic
regression property and Bo\.zejko and Bryc  \cite{Bozejko-Bryc-04} proved the corresponding free version. Anshelevich  \cite{anshelevich-2007}
considered
a Boolean version of this property showing that in the Boolean theory
Laha-Lukacs property characterizes only the Bernoulli distributions.

According to Example 3 in  \cite{anshelevich-2008a} and Proposition 3.1 of  Franz  \cite{Franz:2006}, Boolean, monotone, and free  independence are all special cases of the $c$-freeness for algebras with two states. Our primary goal in this paper is to extend   \cite{Bozejko-Bryc-04} and  \cite{anshelevich-2007} to the two-state setting
under a weaker form of $c$-freeness, which we call $(\PHI|\TAU)$-freeness, and which shares with boolean and free independence  a good description by cumulants.

As an application of our main result, we prove the central limit theorem under a certain type of ``weak dependence" which includes the so called singleton condition, whose importance to central limit theorem was pointed out in Theorem 0 of   Bo\.zejko and Speicher \cite{Bozejko-Speicher-96}; our assumptions are modeled on Kargin \cite{Kargin-07} who weakened freeness assumption in the free central limit theorem. Our result addresses a question  of finding the ``appropriate notions of {\em independence} or of {\em weak dependence}" for the quantum central limit theorem which was raised on page 11 of   \cite{AHO-98d} and describes the limit law; if one is interested solely in convergence, it can be deduced from  the general theory of the quantum  central limit theorem  developed by Accardi,  Hashimoto and Obata \cite{AHO-98c}. Section 8.2 of Hora and Obata \cite{Hora-Obata-07}   discusses   the role of singleton condition and gives the central limit theorem under classical, free, boolean, and monotone independence.


\subsection{A two-state freeness condition}
Let $\calA$ be a unital $*$-algebra with two states $\TAU, \PHI:\calA\to\sC$.
We assume that both states fulfill the usual assumptions of positivity and normalization,
and we assume tracial property $\TAU(ab)=\TAU(ba)$ for $\TAU$, but not for $\PHI$.

A typical model of an algebra with two sates is a  group algebra of a group $ G= *_i G_{i}$ , a free product of groups $ G_{i}$. Here
$\PHI$ is the boolean product of the individual states (which was also called "regular free state"); the simplest example is the free product of integers, $G_i=\ZZ$,  where $G$ is a free group with arbitrary number of generators, and $\PHI$ is the Haagerup state, $\Phi(x)=r^{|x|}$, where $|x|$ is the length of word $x\in G$, $-1\leq r\leq 1$, and state  $\TAU$ is $\delta(0)$.   For details see Bozejko \cite{Bozejko-86,Bozejko-87}.

A self-adjoint element $\XX\in\calA$
 with moments that fulfill appropriate  growth condition
 defines a pair $\mu,\nu$  of
 probability measures on $(\sR,\mathcal{B})$ such that
 $$
\PHI(\XX^k)=\int_\sR x^k\mu(dx) \mbox{ and }  \TAU(\XX^k)=\int_\sR x^k \nu(dx).
 $$
We will refer to measures  $\mu,\nu$ as the $\PHI$-law  and the $\TAU$-law of $\XX$, respectively.

With each set of $a_1,\dots,a_n\in\calA$ and a pair of states $(\PHI,\TAU)$ we associate the  cumulants
$R_k=R_{k,\PHI,\TAU}$, $k=1,2,\dots$, which are the multilinear functions  $\calA^k\to \sC$ defined by
\begin{multline}\label{R-cumulants}
  \PHI(a_1a_2\dots a_n)\\=\sum_{k=1}^n\sum_{1=s_1<s_2<\dots<s_k\leq n}R_{k}(a_1,a_{s_2},\dots,a_{s_k})
  \PHI(a_{s_{k}+1}\dots a_n)\prod_{r=1}^{k-1}\TAU\left(\prod_{j=s_r+1}^{s_{r+1}-1}a_j\right).
\end{multline}
 We will use the notation
\begin{equation}
  \label{r-cums}
  r_n(a_1,\dots,a_n):=R_{n,\TAU,\TAU}(a_1,\dots,a_n).
\end{equation}
We remark that $r_n$ are  the free cumulants with respect to state $\TAU$,
as defined by Speicher \cite{Speicher-94,Speicher-98}; see also  \cite{Nica-Speicher-06}.
For more general theory of cumulants, see  \cite{Lehner-04}.

  Fix $a\in\calA$, and consider the following formal power series
\begin{eqnarray}
 \label{RRR} R(z)&=&\sum_{n=1}^\infty R_n(a,\dots,a) z^{n-1},\\
  m(z)&=&\sum_{n=0}^\infty z^n \TAU(a^n),\\
  M(z)&=&\sum_{n=0}^\infty z^n \PHI(a^n).
\end{eqnarray}
  By Theorem 5.1 of  \cite{Bozejko-Leinert-Speicher}, Eqtn. \eqref{R-cumulants} is equivalent to
  the following relation
  \begin{equation}
    \label{MRm}
    M(z)\left(1-zR(zm(z))\right)=1.
  \end{equation}

\begin{definition}
  We say that subalgebras $\calA_1,\calA_2,\dots$ are $(\PHI|\TAU)$-free
   if for every choice of $a_1,\dots, a_n\in
\bigcup_j\calA_j$ we have
$$
R_n(a_1,\dots,a_n)=0 \mbox{ except if all $a_j$ come from the same algebra.}
$$
\end{definition}
It is important to note that  $(\PHI|\TAU)$-freeness is weaker than $c$-freeness, as explained before Lemma \ref{L1}. Thus  we could have
used the term {\em weak $c$-freeness} instead of $(\PHI|\TAU)$-freeness.

When the algebras are $(\TAU|\TAU)$-free, we will abbreviate this to $\TAU$-free.
From Ref.  \cite{Speicher-94} it follows that
 $\TAU$-freeness coincides with the usual concept of freeness as introduced by Voiculescu \cite{Voiculescu86}.

We will say that $\XX,\YY$ are $(\PHI|\TAU)$-free if the  unital algebras $\sC\langle \XX\rangle $
and $\sC\langle\YY\rangle$ are $(\PHI|\TAU)$-free.

A related concept is the following.
\begin{definition}[See Refs.  \cite{Bozejko-Speicher91} and  \cite{Bozejko-Leinert-Speicher}]
We say that subalgebras $\calA_1,\calA_2,\dots$ are $c$-free if for every
choice of $i_1\ne i_2\ne\dots\ne i_n$
and every choice of
$a_j\in\calA_j$ such that $\TAU(a_j)=0$ (thus $a_j\ne1$) we have
\begin{equation}\label{generalized free}
  \PHI(a_{i_1}\dots a_{i_n})=\prod_{k=1}^n \PHI(a_{i_k}).
\end{equation}
\end{definition}


\subsection{Properties of $(\PHI|\TAU)$-freeness}
  \label{rem R}
If $\calA_1, \calA_2$ are $(\PHI|\TAU)$-free then
  for $a\in\calA_1, b\in\calA_2$
  \begin{equation}
    \label{product2}
    \PHI(ab)=\PHI(a)\PHI(b).
  \end{equation}
  For $a_1,a_2\in\calA_1, b\in\calA_2$ we have
\begin{equation}
    \label{product3}
    \PHI(a_1ba_2)=\TAU(b)\PHI(a_1a_2)-\TAU(b)\PHI(a_1)\PHI(a_2)+\PHI(b)\PHI(a_1)\PHI(a_2).
  \end{equation}
    For $a_1,a_2\in\calA_1, b_1,b_2\in\calA_2$ we have
\begin{multline}
    \label{product4}
    \PHI(a_1b_1a_2b_2)=\PHI(a_1a_2)\TAU(b_1)\TAU(b_2)
-\PHI(a_1)\TAU(a_2)\PHI(b_1b_2)\\+\PHI(a_1)\TAU(a_2)\PHI(b_1)\PHI(b_2)-
\PHI(a_1)\PHI(a_2)\PHI(b_1)\PHI(b_2)
.
  \end{multline}

  Formulas \eqref{product2} \eqref{product3}
  and are identical to formulas under $c$-freeness as given in
 Lemma 2.1 of Ref.  \cite{Bozejko-Leinert-Speicher}. Together with formula \eqref{product4} they
  imply that for a pair of $(\PHI|\TAU)$-free algebras,  \eqref{generalized free} holds for $n\leq 4$.
One can check that if $a,b$ are $(\PHI|\TAU)$-free and
 $\TAU(a)=\TAU(b)=0$ but $\TAU(bab)\ne 0$ then
$\PHI(ababa)\ne \PHI(a)^3\PHI(b)^2$; thus the  concepts of $c$-freeness and of
$(\PHI|\TAU)$-freeness are not equivalent.
Nevertheless they coincide for $\TAU$-free algebras as noted in the following.
\begin{lemma}[page 368 of Ref.  \cite{Bozejko-Leinert-Speicher}]
  \label{L1}  Suppose $\calA_1,\calA_2,\dots$ are $\TAU$-free. Then
 the algebras $\calA_1,\calA_2,\dots$ are $(\PHI|\TAU)$-free if and only if they are $c$-free.
  \end{lemma}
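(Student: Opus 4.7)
The plan is to exploit the moment-cumulant recursion \eqref{R-cumulants} together with the defining property of (Voiculescu) $\TAU$-freeness: under $\TAU$-freeness, $\TAU(b_1 b_2 \cdots b_m) = 0$ whenever $b_j \in \calA_{k_j}$ with $k_1 \ne k_2 \ne \cdots \ne k_m$ and $\TAU(b_j)=0$. I first derive a simplified moment-cumulant relation for alternating centered arguments. Fix $a_j \in \calA_{i_j}$ with $i_1 \ne i_2 \ne \cdots \ne i_n$ and $\TAU(a_j)=0$. In each term of \eqref{R-cumulants}, whenever the gap $[s_r+1,s_{r+1}-1]$ is nonempty the factor $\TAU(a_{s_r+1} \cdots a_{s_{r+1}-1})$ vanishes, because its arguments form a $\TAU$-centered alternating subword. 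Thus only the choice $s_r = r$ contributes, and \eqref{R-cumulants} collapses to
\[
  \PHI(a_1 a_2 \cdots a_n) \;=\; \sum_{k=1}^n R_k(a_1, a_2, \ldots, a_k)\, \PHI(a_{k+1} \cdots a_n),
\]
with the empty $\PHI$-product understood as $1$.

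For $(\PHI|\TAU)$-free $\Rightarrow$ $c$-free: since $i_1 \ne i_2$, every $R_k(a_1, \ldots, a_k)$ with $k \ge 2$ has mixed arguments and vanishes by hypothesis, while $R_1 = \PHI$ on single arguments. The simplified relation becomes $\PHI(a_1 \cdots a_n) = \PHI(a_1)\,\PHI(a_2 \cdots a_n)$, and induction on $n$ yields $\PHI(a_1 \cdots a_n) = \prod_j \PHI(a_j)$, which is exactly \eqref{generalized free}.

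For $c$-free $\Rightarrow$ $(\PHI|\TAU)$-free: by multilinearity and the identity $R_n(\ldots,1,\ldots)=0$ for $n \ge 2$ (obtained inductively from \eqref{R-cumulants}), it suffices to show $R_n(a_1, \ldots, a_n) = 0$ when the $a_j$ are $\TAU$-centered with not all indices $i_j$ equal. A standard reduction, based on combinatorial identities for the $c$-cumulants expressing $R_n$ with consecutive same-algebra arguments merged into shorter cumulants, further lets one assume that the indices alternate. In the alternating $\TAU$-centered case, $c$-freeness supplies $\PHI(a_1 \cdots a_n) = \prod_j \PHI(a_j)$; combining this with the simplified formula and the inductive hypothesis that $R_k$ vanishes on shorter alternating sequences for $2 \le k \le n-1$ isolates the relation $R_n(a_1, \ldots, a_n) = 0$.

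The main obstacle is the reduction from a general mixed cumulant to the alternating-index case in the converse direction; it relies on the $c$-cumulant analogues of Speicher's identities for free cumulants, which are available in the cited Bo\.zejko--Leinert--Speicher theory.
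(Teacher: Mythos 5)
The paper itself offers no proof of this lemma---it is stated with a bare citation to page 368 of Bo\.zejko--Leinert--Speicher---so your argument has to stand entirely on its own. Your preliminary collapse of \eqref{R-cumulants} is correct: for $\TAU$-centered arguments with $i_1\ne i_2\ne\dots\ne i_n$, every nonempty gap contributes a factor $\TAU(a_{s_r+1}\cdots a_{s_{r+1}-1})$ that vanishes by $\TAU$-freeness, leaving $\PHI(a_1\cdots a_n)=\sum_{k=1}^n R_k(a_1,\dots,a_k)\PHI(a_{k+1}\cdots a_n)$, and the forward implication then follows cleanly, since for $k\ge 2$ the arguments of $R_k$ always straddle $\calA_{i_1}$ and $\calA_{i_2}$.

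The converse, however, has a genuine gap. After centering you must show $R_n(a_1,\dots,a_n)=0$ for \emph{arbitrary} mixed tuples, and your reduction to the alternating case leans on a ``products as arguments'' identity for the two-state cumulants $R_n$ that lets you merge consecutive same-algebra entries. No such identity is proved (or stated) in Bo\.zejko--Leinert--Speicher; even for ordinary free cumulants the analogous formula is a later and nontrivial result, and its two-state version is precisely the kind of statement you would have to prove rather than cite. Without it your induction covers only alternating tuples: for a non-alternating word the collapse of \eqref{R-cumulants} fails, because interior $\TAU$-moments of centered letters from a common algebra (e.g.\ $\TAU(a_2a_3)$ with $a_2,a_3\in\calA_1$) need not vanish, so the extra terms cannot be discarded. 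You can close the gap without the merging identity by a uniqueness argument that uses only what you have already established: (i) $c$-freeness together with $\TAU$-freeness determines every mixed $\PHI$-moment from the restrictions $\PHI|_{\calA_i}$ (group adjacent same-algebra letters, center, and induct on word length); (ii) since \eqref{R-cumulants} is a triangular moment--cumulant recursion, there is a unique functional $\PHI'$ with the same restrictions whose mixed cumulants all vanish; (iii) by your forward direction $\PHI'$ is $c$-free, so by (i) $\PHI=\PHI'$ on the generated algebra, and hence the mixed cumulants of $\PHI$ vanish. As written, though, the converse direction is incomplete.
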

%
(It would be interesting to characterize $(\PHI|\TAU)$-freeness without the freeness assumption on $\TAU$.)

We will also rely on the following fact.

\begin{lemma}[Ref  \cite{Bozejko-Leinert-Speicher}]
  \label{L2}  Given a noncommutative random variable $\XX$ in a two-state probability space,
  there exist a two-state algebra (which one can take as the algebra of noncommutative polynomials
  $\sC\langle \XX,\YY\rangle$ in two variables) and two non-commutative random variables $\widetilde{\XX},\widetilde{\YY}$ which are $\TAU$-free, $(\PHI|\TAU)$-free,
  and both have the same $\PHI$-law and $\TAU$-law as $\XX$.
  \end{lemma}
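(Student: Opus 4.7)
The plan is to build $\TAU$ and $\PHI$ separately on the free $*$-polynomial algebra $\calA := \sC\langle \widetilde{\XX}, \widetilde{\YY} \rangle$. For $\TAU$, I would take Voiculescu's free product of two copies of the $\TAU$-law of $\XX$; this yields a tracial positive state on $\calA$ under which $\widetilde{\XX}$ and $\widetilde{\YY}$ are free, with the prescribed $\TAU$-marginals. The real content is then to construct $\PHI$ so that it has the prescribed $\PHI$-marginals on each generator and makes the two generators $(\PHI|\TAU)$-free.

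For $\PHI$, I would work at the level of cumulants. First extract the coefficients $c_n := R_{n,\PHI,\TAU}(\XX,\dots,\XX)$ of the power series $R(z)$ in \eqref{RRR} from the original generating functions $M,m$ via \eqref{MRm}. Then prescribe on $\calA$
\[
R_n(\widetilde{\XX},\dots,\widetilde{\XX}) \;=\; R_n(\widetilde{\YY},\dots,\widetilde{\YY}) \;=\; c_n,
\]
and $R_n(a_1,\dots,a_n)=0$ whenever $a_1,\dots,a_n$ is a mixed string in the two generators. Combined with the $\TAU$ from the previous step, formula \eqref{R-cumulants} defines $\PHI$ unambiguously on every monomial in $\widetilde{\XX},\widetilde{\YY}$ by induction on length, hence on all of $\calA$ by linearity. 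By construction, the mixed cumulants vanish, which is exactly $(\PHI|\TAU)$-freeness; specializing \eqref{R-cumulants} to powers of $\widetilde{\XX}$ (resp.\ $\widetilde{\YY}$) reproduces \eqref{MRm}, so the $\PHI$-marginal of each generator coincides with the $\PHI$-law of $\XX$.

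The single non-routine point, and what I expect to be the main obstacle, is positivity of the linear functional $\PHI$ defined this way. I would handle it by realizing $\PHI$ as a vacuum vector state on a Fock-type Hilbert space, with $\widetilde{\XX}$ and $\widetilde{\YY}$ represented as appropriate sums of creation, annihilation, and gauge operators encoding both the $\TAU$-data and the $\PHI$-data of $\XX$; this is precisely the $c$-free product construction carried out by Bo\.zejko, Leinert and Speicher in \cite{Bozejko-Leinert-Speicher}. A cleaner shortcut is to apply their $c$-free product directly to two copies of the original unital algebra $\sC\langle\XX\rangle$, producing $c$-free $\widetilde{\XX},\widetilde{\YY}$ with the correct marginals, and then invoke Lemma \ref{L1} to upgrade $c$-freeness to $(\PHI|\TAU)$-freeness (the $\TAU$-freeness hypothesis of Lemma \ref{L1} is automatic from the $c$-free product construction). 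Either route delivers the conclusion.
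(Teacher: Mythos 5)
Your ``cleaner shortcut'' is precisely the paper's own proof: invoke the $c$-free (and $\TAU$-free) product construction of Refs.~\cite{Bozejko-Speicher91, Bozejko-Leinert-Speicher} applied to two copies of $\sC\langle\XX\rangle$, then use Lemma~\ref{L1} to pass from $c$-freeness to $(\PHI|\TAU)$-freeness. Your longer cumulant-prescription route is a correct unpacking of the same idea, since it delegates the one genuinely nontrivial step---positivity of $\PHI$---to the very same Fock-space construction, so both versions are essentially the paper's argument.
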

  \begin{proof}
  Theorem 1 of  Ref.   \cite{Bozejko-Speicher91}, see also
    Theorem 2.2 of Ref.  \cite{Bozejko-Leinert-Speicher}, shows how to extend both states  to
     the free product of the original algebra so that the resulting algebras are $c$-free and $\TAU$-free.
     By Lemma \ref{L1}, they are thus $(\PHI|\TAU)$-free.
  \end{proof}

\section{A $(\PHI|\TAU)$-free  quadratic regression problem}
In this section we prove a two-state version of Theorem 3.2 of Ref.  \cite{Bozejko-Bryc-04}.
The statement is fairly technical, but we found it useful
 for our proof of the central limit theorem (Theorem \ref{T2} below).
\begin{theorem}\label{T1}
  Suppose $\XX,\YY$ are self-adjoint $(\PHI|\TAU)$-free and
  \begin{equation}\label{equal laws}
    \PHI(\XX^n)=\PHI(\YY^n),\;  \TAU(\XX^n)=\TAU(\YY^n)
  \end{equation} for all $n$.
Furthermore,
assume that $\PHI(\XX)=0$, $\PHI(\XX^2)=1$.
(This can always be achieved by a shift and dilation, as long as $\PHI(\XX^2)\ne0$.)

  Let $\SSS=\XX+\YY$ and suppose that there are $a,c\in\sR$ and $b>-2$
  such that
  \begin{equation}\label{quadr var}
    \PHI\left((\XX-\YY)^2\SSS^n\right)=c\PHI\left((4\II+2a\SSS+b\SSS^2)\SSS^n\right), \;n=0,1,2\dots.
  \end{equation}
  Then
  the $\PHI$-moment generating functions $M_{\SSS}(z):=\sum_{k=0}^\infty z^k \PHI(\SSS^k)$ and
  $m_{\SSS}(z):=\sum_{k=0}^\infty z^k \TAU(\SSS^k)$, which are defined as formal power series,
  are related as follows
 \begin{equation} \label{MMM}
M_\SSS(z)=\frac{2+b-(2az+b)m_\SSS(z)}{2+b-(4z^2+2a z+ b)m_\SSS(z)}.
\end{equation}

\end{theorem}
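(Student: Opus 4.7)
The plan is to compute the formal power series $F(z) := \sum_{n \geq 0} z^n \PHI(\XX^2 \SSS^n)$ in two different ways and equate them. The regression condition \eqref{quadr var} will give one expression for $F$ in terms of $M_\SSS$, while the $(\PHI|\TAU)$-cumulant expansion \eqref{R-cumulants} will give a second expression involving $M_\SSS$, $m_\SSS$, and the $R$-cumulants of $\XX$.

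For the first expression, note that since $(\PHI|\TAU)$-freeness determines all $\PHI$-joint moments from the marginals via \eqref{R-cumulants}, the equal marginal laws \eqref{equal laws} imply the symmetries $\PHI(\XX^2\SSS^n) = \PHI(\YY^2\SSS^n)$ and $\PHI(\XX\YY\SSS^n) = \PHI(\YY\XX\SSS^n)$. Adding the $\PHI$-expansions of $(\XX+\YY)^2\SSS^n$ and $(\XX-\YY)^2\SSS^n$ gives
\[
4\PHI(\XX^2\SSS^n) = p_{n+2} + \PHI((\XX-\YY)^2\SSS^n), \qquad p_n := \PHI(\SSS^n).
\]
Taking $n=0$ in \eqref{quadr var} (using $\PHI(\SSS)=0$ and $\PHI((\XX-\YY)^2) = \PHI(\SSS^2) = 2$) pins down $c=1/(2+b)$. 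Substitution then yields $4\PHI(\XX^2\SSS^n) = (1+bc)p_{n+2} + 2ac\,p_{n+1} + 4c\,p_n$, that is,
\[
4F(z) = (1+bc)\frac{M_\SSS(z)-1}{z^2} + 2ac\,\frac{M_\SSS(z)-1}{z} + 4c\,M_\SSS(z).
\]

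For the second expression, I expand $\SSS^n=(\XX+\YY)^n$ as a sum of monomials and apply \eqref{R-cumulants} to each $\PHI(\XX\cdot\XX\cdot b_3\cdots b_{n+2})$. By $(\PHI|\TAU)$-freeness, $R_k(b_{s_1},\ldots,b_{s_k})$ vanishes unless every $b_{s_j}=\XX$, so the selected letters contribute $R_k(\XX,\ldots,\XX) =: R_{\XX,k}$, and summing the unselected letters over $\{\XX,\YY\}$ collapses each inner block $\TAU(b_{s_r+1}\cdots b_{s_{r+1}-1})$ to $\TAU(\SSS^{d_r})$ and the suffix to $\PHI(\SSS^{d_k})$. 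I split into two sub-cases: $s_2=2$ (the $\XX$ at position $2$ is selected, giving an empty first block) and $s_2\geq 3$ (the fixed $\XX$ at position $2$ sits inside the first block, yielding a decorated factor $\TAU(\XX\SSS^{e_1})$ with $e_1=s_2-3$). Each sub-case becomes a geometric convolution; writing $w:=zm_\SSS(z)$ and using $R_{\XX,1}=0$, I obtain
\[
F(z) = \frac{M_\SSS(z)\,R_\XX(w)}{m_\SSS(z)}\Bigl(\frac{1}{z}+\widetilde{T}(z)\Bigr), \qquad \widetilde{T}(z) := \sum_{n\geq 0}z^n\TAU(\XX\SSS^n).
\]
Two further simplifications apply: $(\PHI|\TAU)$-freeness plus equal marginals forces $R_\SSS=2R_\XX$, so \eqref{MRm} yields $M_\SSS R_\XX(w)=(M_\SSS-1)/(2z)$; and the symmetry of the joint $\TAU$-law under $\XX\leftrightarrow\YY$ (which we may secure, if necessary, by invoking Lemma \ref{L2} to pass to a model where $\XX,\YY$ are also $\TAU$-free) gives $\TAU(\XX\SSS^n)=\TAU(\SSS^{n+1})/2$, hence $\widetilde{T}(z)=(m_\SSS(z)-1)/(2z)$. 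Substituting produces $F(z)=(M_\SSS-1)(m_\SSS+1)/(4z^2 m_\SSS)$.

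Equating the two expressions for $F$, clearing denominators, and collecting terms linear in $M_\SSS$ produces $M_\SSS\bigl(1-c\,m_\SSS(b+2az+4z^2)\bigr)=1-c\,m_\SSS(b+2az)$, which after multiplying numerator and denominator by $1/c=2+b$ is exactly \eqref{MMM}. The main obstacle is the cumulant expansion of the second step: one must carefully enumerate the two sub-cases for $s_2$, identify the resulting block/suffix sums as geometric convolutions in $m_\SSS$ and $M_\SSS$, and recognize that the decorated first block in the case $s_2\geq 3$ contributes precisely the extra factor $\widetilde{T}(z)$, whose evaluation in closed form is what ultimately closes the relation between $M_\SSS$ and $m_\SSS$.
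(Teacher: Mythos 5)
Your overall architecture (compute a generating function in two ways and equate) is coherent, and the closing algebra does recover \eqref{MMM}; but two inputs to your computation are not available under the hypotheses of Theorem \ref{T1}, and the device you propose for securing them does not work. Both the symmetry $\PHI(\XX^2\SSS^n)=\PHI(\YY^2\SSS^n)$ and the evaluation $\TAU(\XX\SSS^n)=\tfrac12\TAU(\SSS^{n+1})$ (your $\widetilde T(z)=(m_\SSS(z)-1)/(2z)$) require the joint $\TAU$-distribution of $(\XX,\YY)$ to be invariant under swapping $\XX$ and $\YY$. The theorem does not assume $\TAU$-freeness, and $(\PHI|\TAU)$-freeness places no constraint on mixed $\TAU$-moments: formula \eqref{R-cumulants} expresses $\PHI$-moments in terms of the $R$-cumulants \emph{and} the joint $\TAU$-moments of the blocks, so equal marginals \eqref{equal laws} do not determine $\PHI(\XX^2\SSS^n)$, contrary to your opening premise. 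Concretely, traciality of $\TAU$ gives $\TAU(\XX\SSS^2)-\tfrac12\TAU(\SSS^3)=\tfrac12\left(\TAU(\XX^2\YY)-\TAU(\YY^2\XX)\right)$, which need not vanish. Nor can you invoke Lemma \ref{L2}: replacing $(\XX,\YY)$ by a $\TAU$-free model with the same marginals changes the joint law, hence changes $m_\SSS$ in the conclusion \eqref{MMM} and may destroy the hypothesis \eqref{quadr var}, so this is not a legitimate ``without loss of generality''.

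The gap is repairable, and the repair essentially lands you on the paper's proof. Work with the symmetrized quantity $\PHI(\XX^2\SSS^n)+\PHI(\YY^2\SSS^n)$, or expand $\PHI((\XX-\YY)^2\SSS^n)$ directly as the paper does: in the first variant the troublesome first blocks pair up, $\TAU(\XX\SSS^e)+\TAU(\YY\SSS^e)=\TAU(\SSS^{e+1})$, while in the paper's variant every term in which position $2$ is unselected carries the factor $R_k(\XX-\YY,\SSS,\dots,\SSS)=0$ and drops out before any mixed $\TAU$-moment is evaluated. The paper then reads off the recursion $\PHI((\XX-\YY)^2\SSS^n)=\PHI(\SSS^{n+2})-\sum_{m=1}^n\TAU(\SSS^m)\PHI((\XX-\YY)^2\SSS^{n-m})$, feeds it into \eqref{quadr var} together with $c=(2+b)^{-1}$, and converts the resulting moment recursion into \eqref{MMM}; only the moments $\TAU(\SSS^m)$ and $\PHI(\SSS^m)$ ever appear, which is exactly what the statement requires.
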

\begin{remark}
  We will apply \eqref{MMM}
  to the case when $m_\SSS(z)$ converges for small enough $|z|$, in the form as written.
  In  general, the right hand side of \eqref{MMM} needs to be interpreted correctly.
  Recall that the composition $p(q(z))$ of two power series $p,q$
  is well defined if $q(z)$ has no constant term. Note that the formal power series
   $-b+(4z^2+2a z+ b)m_\SSS(z)$  has no constant term, so it can be composed
  with the formal power series
  $\sum_{n=0}^\infty\frac1{2^{n+1}}z^n$,
which is a formal power expansion of the function
 $\frac{1}{2-z}$. It is therefore natural to denote such a composition by
$$\frac{1}{2-(-b+(4z^2+2a z+ b)m_\SSS(z))}.$$
The right hand side of \eqref{MMM} is then interpreted as
  the product of this power series with the
   formal power series $2+b-(2az+b)m_\SSS(z)$.
\end{remark}

\begin{remark} Our assumptions on $\PHI$
do not allow us to use conditional expectations. However, it is still
natural to
ask which properties of conditional expectations would have implied
 assumptions of Theorem \ref{T1}.
To this end, we denote by $\PHI(\cdot|\SSS)$ the conditional expectation onto the
commutative algebra generated by $\SSS$.

From equality of the laws \eqref{equal laws} and $(\PHI|\TAU)$-freeness, one can deduce that
\begin{equation}
  \label{lin reg}
 \PHI(\XX\SSS^n)=\frac12\PHI\left(\SSS^{n+1}\right), \;n=0,1,2\dots.
\end{equation}
(See \eqref{EX} below.)  When the conditional expectation exists, this property follows from
$\PHI(\XX|\SSS)=\frac12\SSS$. We can then derive \eqref{quadr var} from the quadratic variance property
\begin{equation}\label{Var}
\PHI(\XX^2|\SSS)-(\PHI(\XX|\SSS))^2=c\left(\II+\frac{a}{2}\SSS+\frac{b}{4}\SSS^2\right).
\end{equation}
%
\end{remark}

\subsection{Proof of Theorem \ref{T1}}
 We first remark that $ c=(2+b)^{-1}$.
This follows from
 \eqref{quadr var} with $n=0$ since $\PHI(\XX\pm \YY)^2=2\pm\PHI(\XX\YY)\pm\PHI(\YY\XX)=2$.

By definition,  $R_n(\SSS,\dots,\SSS)=R_n(\XX,\dots,\XX)+R_n(\YY,\dots,\YY)$.
From \eqref{equal laws} we see that $R_n(\XX,\dots,\XX)=R_n(\YY,\dots,\YY)$. Thus
\begin{multline}\label{R1}
  R_n(\XX-\YY,\SSS,\dots,\SSS)=R_n(\XX,\SSS,\dots,\SSS)-R_n(\YY,\SSS,\dots,\SSS)\\
  =R_n(\XX,\dots,\XX)-R_n(\YY,\dots,\YY)=0
\end{multline}
for all $n$.
By \eqref{R-cumulants} this implies
\begin{equation}
  \label{EX}
  \PHI\left((\XX-\YY)\SSS^n\right)=0.
\end{equation}

Similarly, using multilinearity of $R$,
 \begin{multline}\label{R2}
  R_n(\XX-\YY,\XX-\YY,\SSS,\dots,\SSS)\\=
R_n(\XX,\XX-\YY,\SSS,\dots,\SSS)-R_n(\YY,\XX-\YY,\SSS,\dots,\SSS)\\
=R_n(\XX,\dots,\XX)+R_n(\YY,\dots,\YY)=
 R_n(\SSS,\dots,\SSS)
 \end{multline} for all $n\geq 2$.
Formula \eqref{R-cumulants} therefore implies that
\begin{multline*}
\PHI\left((\XX-\YY)^2\SSS^n\right)\\=
\sum_{k=2}^{n+2}\sum_{1=b_1<b_2=2<\dots<b_k\leq n+2}R_{k}(\XX-\YY,\XX-\YY,\SSS\dots,\SSS)
  \PHI(\SSS^{n-b_k-1})\prod_{r=1}^{k-1}\TAU(\SSS^{b_{r+1}-b_r-1})
  \\+
\sum_{k=1}^{n+2}  \sum_{1=b_1<2<b_2<\dots<b_k\leq n+2}R_{k}(\XX-\YY,\SSS,\dots,\SSS)
  \PHI(\SSS^{n-b_k-1})\prod_{r=1}^{k-1}\TAU(\SSS^{b_{r+1}-b_r-1}).
  \end{multline*}
  By \eqref{R1}, the second sum vanishes. Using \eqref{R2} we get
\begin{multline}
\label{C-moments}
\PHI\left((\XX-\YY)^2\SSS^n\right)\\
=\sum_{k=2}^{n+2}\sum_{1=b_1<b_2=2<\dots<b_k\leq n+2}R_{k}(\SSS,\SSS,\SSS\dots,\SSS)
  \PHI(\SSS^{n-b_k-1})\prod_{r=1}^{k-1}\TAU(\SSS^{b_{r+1}-b_r-1}).
\end{multline}
Comparing this with the decomposition for $\PHI(\SSS^{n+2})$ we see that
\begin{multline*}
\PHI\left((\XX-\YY)^2\SSS^n\right)=\PHI(\SSS^{n+2})\\
-\sum_{k=2}^{n+2}  \sum_{1=b_1<2<b_2<\dots<b_k\leq n+2}R_{k}(\SSS,\SSS,\dots,\SSS)
  \PHI(\SSS^{n-b_k-1})\prod_{r=1}^{k-1}\TAU(\SSS^{b_{r+1}-b_r-1}).
\end{multline*}
We now rewrite the last sum  based on the value of $m=b_2-b_1$, compare Ref.  \cite{Bozejko-Bryc-04}.
 We have
\begin{multline*}
\PHI\left((\XX-\YY)^2\SSS^n\right)=\PHI(\SSS^{n+2})\\
-\sum_{m=1}^{n}\TAU(\SSS^m)\sum_{k=2}^{n+2}  \sum_{1=b_1<1+m=b_2<\dots<b_k\leq n+2}R_{k}(\SSS,\SSS,\dots,\SSS)
  \PHI(\SSS^{n-b_k-1})\\ \times\prod_{r=1}^{k-1}\TAU(\SSS^{b_{r+1}-b_r-1}).
\end{multline*}
Since $b_2-b_1-1=m$, formula \eqref{R2} gives
\begin{multline*}
\sum_{k=2}^{n+2}  \sum_{1=b_1<1+m=b_2<\dots<b_k\leq n+2}R_{k}(\SSS,\SSS,\dots,\SSS)
  \PHI(\SSS^{n-b_k-1})\prod_{r=1}^{k-1}\TAU(\SSS^{b_{r+1}-b_r-1})\\
  =
  \sum_{k=2}^{n+2}  \sum_{1=b_1<1+m=b_2<\dots<b_k\leq n+2}R_{k}(\XX-\YY,\XX-\YY,\SSS,\dots,\SSS)
  \PHI(\SSS^{n-b_k-1})\TAU(\SSS^m)\\ \times\prod_{r=2}^{k-1}\TAU(\SSS^{b_{r+1}-b_r-1}).
\end{multline*}
Re-indexing the variables so that $b_2=2$ and inserting this into
\eqref{C-moments} we get
\begin{equation*}
\PHI\left((\XX-\YY)^2\SSS^n\right)=\PHI(\SSS^{n+2})-
\sum_{m=1}^n \TAU(\SSS^m)\PHI((\XX-\YY)^2\SSS^{n-m}).
\end{equation*}
Thus from \eqref{quadr var} we get
\begin{equation*}
  \PHI(\SSS^{n+2})=\frac{1}{2+b}\sum_{j=0}^n\TAU(\SSS^j)\left(4\PHI(\SSS^{n-j})+2 a \PHI(\SSS^{n-j+1})+b\PHI(\SSS^{n-j+2})\right).
\end{equation*}
A routine argument now relates the formal power series:
\begin{multline*}
  M_\SSS(z)=1+z^2\sum_{n=0}^\infty z^n \PHI(\SSS^{n+2})
  \\=1+\frac{z^2}{2+b}\sum_{n=0}^\infty \sum_{j=0}^nz^j\TAU(\SSS^j)z^{n-j}\left(4\PHI(\SSS^{n-j})+2 a \PHI(\SSS^{n-j+1})+b\PHI(\SSS^{n-j+2})\right)
 \\ =1+\frac{z^2}{2+b} \sum_{j=0}^\infty z^j\TAU(\SSS^j)\sum_{n=j}^\infty z^{n-j}\left(4\PHI(\SSS^{n-j})+2 a \PHI(\SSS^{n-j+1})+b\PHI(\SSS^{n-j+2})\right)
\\=
1+\frac{m_\SSS(z)}{2+b}\left(4z^2M_\SSS(z)+2 a z (M_\SSS(z)-1)  + b(M_\SSS(z)-1)\right).
\end{multline*}

\section{The $\PHI$-law of $\XX$}
In this section we are interested in one explicit case when Theorem \ref{T1}
allows us to determine the $\PHI$-law of $\XX$
from the $\TAU$-law of $\XX$. This case arises
when $\XX,\YY$ are $\TAU$-free and $(\PHI|\TAU)$-free with compactly supported laws. Then the $\PHI$-law and the $\TAU$-law
 of $\XX+\YY$ are determined uniquely
from the laws of $\XX,\YY$  by the
generalized convolution $\citimes$ which was introduced 
 by Bo\.zejko and Speicher \cite{Bozejko-Speicher91} and  studied in Refs.
  \cite{Bozejko-Leinert-Speicher,B-W97,B-W01,Krystek-Yoshida03,Krystek-Yoshida04}.
 The generalized convolution is a
 binary operation  on the pairs
 of compactly supported probability measures $(\mu,\nu)$.
 The analytic approach from Theorem 5.2 in Ref.  \cite{Bozejko-Leinert-Speicher} is especially convenient
 for explicit calculations.
According to this result, the generalized convolution
$(\mu_1,\nu_1)\citimes(\mu_2,\nu_2)$ of pairs of compactly supported probability measures
is a pair $(\mu,\nu)$ of
compactly supported probability measures which is determined by the following procedure.
Consider the Cauchy transforms
\[ G_j(z)=\int\frac{1}{z-x}\mu_j(dx), \; g_j(z)=\int\frac{1}{z-x}\nu_j(dx), \; j=1,2.\]
  Let $k_j(z)$ be the inverse function of $g_j(z)$ in a
neighborhood of $\infty$, and define
\begin{equation}\label{EQ: r} r_j(z)=k_j(z)-1/z.\end{equation}
On the second component the $c$-convolution acts
as the free convolution \cite{Voiculescu86}, $\nu=\nu_1\boxplus\nu_2$.
Recall that the free convolution $\nu$ of measures $\nu_1,\nu_2$ is
 the unique probability measure
with the Cauchy transform $g(z)$ which solves the equation
\[g(z)=\frac{1}{z-r_1(g(z))-r_2(g(z))}.\]

To define the action of the generalized convolution on the first component, let
\[R_j(z)=k_j(z)-1/G_j(k_j(z)).\]
Thus
\begin{equation}
  \label{R2G}
G_j(z)=\frac{1}{z-R_j(g_j(z))}.
\end{equation}
The first component of the generalized convolution is defined as the unique
 probability measure  $\mu$ with the Cauchy transform
\[
G(z)=\frac{1}{z-R_1(g(z))-R_2(g(z))}\;.
\]
 We
write \[ (\mu,\nu)=(\mu_1,\nu_1)\citimes(\mu_2,\nu_2).
\]

We remark that
$$r(z)=\sum_{k=1}^\infty r_k z^{k-1}, \;\; R(z)=\sum_{k=1}^\infty R_k z^{k-1}$$
are the generating functions for the $\TAU$-free and $(\PHI|\TAU)$-free cumulants respectively, see \eqref{RRR}.
We also note that the above relations can be interpreted as combinatorial relations between $\TAU$-moments and
$\PHI$-moments; the assumption of compact support allows us to determine the laws uniquely from moments.
\subsection{The case of ``constant conditional variance"}

\begin{proposition}
  \label{L3} Suppose $\XX,\YY$ are $\TAU$-free with the same compactly supported
  $\TAU$-law $\nu$, and are $(\PHI|\TAU)$-free with the same $\PHI$-law.
  If \eqref{quadr var} holds with $a=b=0$, then the $\PHI$-law of $\XX$ is compactly supported
  and uniquely determined by $\nu$.
\end{proposition}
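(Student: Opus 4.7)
The plan is to apply Theorem \ref{T1} with $a=b=0$ (so $c=1/2$) to $\SSS=\XX+\YY$, which reduces \eqref{MMM} to the functional relation
\begin{equation*}
M_\SSS(z)=\frac{1}{1-2z^2 m_\SSS(z)}.
\end{equation*}
Since $\XX,\YY$ are $\TAU$-free with common compactly supported $\TAU$-law $\nu$, the $\TAU$-law of $\SSS$ is the free convolution $\nu\boxplus\nu$, which is compactly supported and uniquely determined by $\nu$. Hence $m_\SSS(z)$ is analytic at $z=0$ and determined by $\nu$; since the denominator above equals $1$ at $z=0$, the series $M_\SSS(z)$ is analytic in a neighborhood of $0$ and is likewise uniquely determined by $\nu$. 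In particular, the $\PHI$-law of $\SSS$ is already seen to be compactly supported.

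Next I would recover the $\PHI$-law of $\XX$ by inverting the generalized convolution $\citimes$. Since $\XX$ and $\YY$ have the same $\PHI$-law and $\TAU$-law and are $(\PHI|\TAU)$-free, the $(\PHI|\TAU)$-free cumulants add under $\SSS=\XX+\YY$, giving $R_\SSS(z)=2R_\XX(z)$. Converting $M_\SSS,m_\SSS$ to Cauchy transforms $G_\SSS(z)=\tfrac{1}{z}M_\SSS(1/z)$ and $g_\SSS(z)=\tfrac{1}{z}m_\SSS(1/z)$, both analytic near $\infty$, relation \eqref{R2G} applied to $\SSS$ yields
\begin{equation*}
R_\SSS(g_\SSS(z))=z-\frac{1}{G_\SSS(z)},
\end{equation*}
which, composed with the inverse $k_\SSS$ of $g_\SSS$ near $\infty$, determines $R_\SSS$ as an analytic function in a neighborhood of $0$, and hence $R_\XX=R_\SSS/2$. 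A second application of \eqref{R2G}, this time to $\XX$, gives
\begin{equation*}
G_\XX(z)=\frac{1}{z-R_\XX(g_\XX(z))},
\end{equation*}
where $g_\XX$ is the Cauchy transform of the (known) measure $\nu$. This recovers $G_\XX$ as an analytic function near $\infty$ built entirely from data determined by $\nu$, so the $\PHI$-moments of $\XX$ are uniquely determined by $\nu$, and analyticity of $G_\XX$ at $\infty$ forces geometric growth of the $\PHI$-moments, hence compact support.

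The main obstacle will be the careful bookkeeping between formal power series and honest analytic functions in overlapping punctured neighborhoods of $\infty$: one must verify that $k_\SSS$ exists and is analytic on a suitable domain, that the composition $R_\SSS(g_\XX(z))$ is well defined, and that $z-R_\XX(g_\XX(z))$ does not vanish near $\infty$, so that $G_\XX$ is truly the Cauchy transform of a compactly supported probability measure rather than a purely formal object. Existence of the $\PHI$-law as a genuine measure is automatic from positivity of the state $\PHI$ combined with the geometric moment growth, and uniqueness then follows from determinacy of the Hamburger moment problem under compact support.
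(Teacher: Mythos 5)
Your proposal is correct and follows essentially the same route as the paper: apply Theorem \ref{T1} with $a=b=0$ to get $M_\SSS(z)=1/(1-2z^2m_\SSS(z))$, use additivity and equality of laws to halve the $(\PHI|\TAU)$-cumulants of $\SSS$, and reconstruct $G_\XX$ from \eqref{R2G} with $g_\XX$ the Cauchy transform of $\nu$. The only difference is that the paper reads off from $G_\SSS(z)=1/(z-2g_\SSS(z))$ that $R_\SSS(u)=2u$ exactly (all cumulants vanish except the second), so $R_\XX(u)=u$, which dissolves the analytic bookkeeping about $k_\SSS$ and composition domains that you flag as the main obstacle.
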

\begin{proof}
The $\TAU$-law of $\SSS$ is the free convolution $\nu\boxplus\nu$, so it is compactly supported.
Therefore $m_\SSS(z)$ is given by a series that converges for small enough $|z|$.
 Then \eqref{MMM} reduces to
\begin{equation*}
  M_\SSS(z)=\frac{1}{1-2 z^2 m_\SSS(z)},
\end{equation*}
and $M_\SSS(z)$ is also given by a convergent series. In particular, the $\PHI$-law
of $\SSS$ is compactly supported.
So for $\Im z>0$, the Cauchy transform is
\begin{equation}\label{g2G}
  G_\SSS(z)=\frac1zM_\SSS(1/z)=\frac{1}{z-2 g_\SSS(z)}.
\end{equation}
Thus $R_k(\SSS,\dots,\SSS)=0$ for all $k$ except for $R_2(\SSS,\SSS)=2$.
This shows that $R_k(\XX,\dots,\XX)=0$ for all $k$ except for $R_2(\XX,\XX)=1$.
Thus $R_\XX(z)=z$ and \eqref{MRm} gives
\begin{equation}
 \label{M-x} M_\XX(z)=\frac{1}{1- z^2 m_\XX(z)}.
\end{equation}
This implies that $\PHI$-law of $\XX$ has compact support,
and its Cauchy transform is uniquely determined by
\begin{equation}
 \label{G-x} G_\XX(z)=\frac{1}{z- g_\XX(z)}.
\end{equation}
%
\end{proof}

In particular, suppose $\nu$ is
the semicircle law with mean zero and variance $\sigma^2$, so that
 $g_\XX(z)=\frac{z-\sqrt{z^2-4\sigma ^2}}{2\sigma ^2}$.
 Proposition \ref{L3} then
 shows that the $\PHI$-law of $\XX$ has Cauchy-Stieltjes transform
\begin{equation}\label{G_X}
G_\XX(z)=\frac{(\sigma ^2-\frac12)\,z -\frac12{\sqrt{z^2-4\,\sigma ^2}}}
  {1+\left( \sigma ^2-1 \right) \,z^2}.
\end{equation}
This law plays the role of the ``Gaussian limit" in  Ref.  \cite{Bozejko-Leinert-Speicher}.
%
\subsection{The case of ``linear conditional variance"}
Suppose \eqref{quadr var} holds with $b=0$. Then \eqref{MMM} reduces to
\begin{equation*}
  M_\SSS(z)=\frac{1-a z m_\SSS(z)}{1-(2z+a)z m_\SSS(z)}.
\end{equation*}
So again the $\Phi$-law of $\SSS$ is compactly supported, if the $\TAU$-law is, and
the Cauchy transform is
\begin{equation*}
  G_\SSS(z)=\frac{1-a g_\SSS(z)}{z-(2+az)g_\SSS(z)}=\frac{1}{z-R_\SSS(g_\SSS(z))}
\end{equation*}
with
\begin{equation*}
  R_\SSS(u)=\frac{2u}{1-au}.
\end{equation*}
This shows that $R_\XX(z)=\frac{z}{1-az}$ and
\begin{equation*}
  G_\XX(z)=\frac{1-a g_\XX(z)}{z-(1+az)g_\XX(z)}.
\end{equation*}
In particular, suppose that the $\TAU$-law of $\XX$ is Marchenko-Pastur with parameter $\la>0$, so that
$$g_\XX(z)=\frac{z+(1-\la)-\sqrt{(z-1-\la)^2-4\la}}{2z}.$$
If $a=1$, then the $\PHI$-law of $\XX$ is compactly supported, with Cauchy transform
$$
  G_\XX(z)=\frac{1 + \lambda
    -z ( 1 -
       2 \lambda  )  -
    {\sqrt{
        {( z-1 -
           \lambda  ) }^2-4 \lambda  }}}
    {2\left(1
     +  z ( 1 +
       \lambda  )-  z^2
    ( 1 - \lambda  )\right) }.
$$
Related laws appear in Eqtn. (17) of Ref.  \cite{Bryc-Wesolowski-04} and
 on page 380 in Ref.  \cite{Bozejko-Leinert-Speicher}.

\section{Central limit theorem for non-identical summands}
The central limit theorem  and the Poisson convergence theorem for sums of
$(\PHI|\TAU)$-free random variables that are also $\TAU$-free
 appear in Theorems 4.3 and 4.4 of Ref.  \cite{Bozejko-Leinert-Speicher}. Recently
 Kargin \cite{Kargin-07} observed that in the free case one can dispense
 with the assumption of identical laws and at the same time relax the freeness assumption. A similar   result in classical probability is due to
 Komlos \cite{komlos1973clt} who assumes a much weaker version of singleton condition \eqref{singleton1}  and has an inequality  in his condition (6) that substitutes for \eqref{Kargin 2}.
Komlos' conditions were motivated by   (classical) central limit theorem for the so called multiplicative systems. We also note that in classical probability  Jakubowski and Kwapie\'n \cite{Jakubowski-Kwapien-79} discovered a beautiful connection between multiplicative systems and independent sequences. No counterpart of this result is known in noncommutative setting; compare also  non-commutative $p$-orthogonality and Remark 2.4 of Pisier \cite{Pisier:2000}, and work of K\"ostler and  Speicher \cite{Kostler-Speicher:2008} on noncommutative versions of de~Finetti's theorem.

 In this section we use Theorem \ref{T1} to deduce a 
 two-state version of Kargin's result.
The convergence of moments can also be obtained  as a corollary of Theorem 3 in Accardi Hashimoto and Obata \cite{AHO-98d}, see also  Theorem 3.3 in  \cite{AHO-98c}, Theorem 0 of  \cite{Bozejko-Speicher-96}, and Section 8.2 in  \cite{Hora-Obata-07}.
This theorem says that under the singleton condition \eqref{singleton1}, in order to
complete
the proof of CLT, it suffices to control ergodic averages
of totally entangled pair partitions. The disentanglement can be
achieved from various conditions that include   statistical conditions, such as   the free
case or the generalized freeness given by   conditions \eqref{singleton2} and \eqref{Kargin 2}. This approach, as well as  classical CLT in Ref~ \cite{komlos1973clt}, suggests that one should seek
a weaker version of \eqref{Kargin 2} that perhaps would be stated as an inequality.  On the other hand, our proof from Theorem \ref{T1} gives directly the formula for the Cauchy-Stieltjes transform of the limit law which would require additional work if the techniques from  \cite{AHO-98c} were applied.

 We also note that Wang \cite{Wang-08} uses analytical methods to study limit theorems for additive $c$-convolution with measures of unbounded support.
  It is not obvious how Kargin's condition A should be generalized to this setting.  In fact, a generalization of  Theorem \ref{T1}  to   unbounded random variables   would be interesting even in the free case studied in  \cite{Bozejko-Bryc-04}.

 \begin{definition}
   We will say that a sequence of random variables $\XX_1,\XX_2,\dots$ satisfies
    Kargin's Condition A with respect to  $(\PHI|\TAU)$, if:
       \begin{enumerate}
     \item For every $k\not\in\{j_1,\dots, j_n\}$ the following singleton conditions hold:
     \begin{multline}
       \label{singleton1}
            \PHI(\XX_k\XX_{j_1}\dots \XX_{j_r})=\PHI(\XX_{j_1}\XX_k\XX_{j_2}\dots \XX_{j_r})=\dots\\=\PHI(\XX_{j_1}\dots \XX_{j_r}\XX_k)=0.
     \end{multline}
     \begin{equation}
       \label{singleton2}
            \TAU(\XX_k\XX_{j_1}\dots \XX_{j_r})=0.
     \end{equation}
     (In particular, $\TAU(\XX_j)=\PHI(\XX_j)=0$.)
\item For every $k\not\in{\{j_1,\dots, j_r\}}$,  and $0\leq p\leq r$,
\begin{multline}
  \label{Kargin 2}
\PHI(\XX_k\XX_{j_1}\dots \XX_{j_p}\XX_k\XX_{j_{p+1}}\dots \XX_{j_r})\\=
\PHI(\XX_k^2)\TAU(\XX_{j_1}\dots \XX_{j_p})\PHI(\XX_{j_{p+1}}\dots \XX_{j_r}).
\end{multline}
   \end{enumerate}
 \end{definition}

We remark that conditions \eqref{singleton1} and \eqref{Kargin 2} are automatically satisfied if $\XX_1,\XX_2,\dots$
are $\PHI$-centered and $(\PHI|\TAU)$-free; clearly,
condition \eqref{singleton2} holds true if $\XX_1,\XX_2,\dots$ are $\TAU$-centered
and $\TAU$-free but of course it is weaker and can hold also for classical (commutative)
independent random variables.

 \begin{theorem}
   \label{T2}
   Suppose that \begin{enumerate}
    \item $\XX_1,\XX_2,\dots$ satisfies  Kargin's Condition A with respect to $(\PHI|\TAU)$;
     \item All joint moments of order $k$ are uniformly bounded
  \begin{equation}
    \label{Unif Bound}
     \sup_{j_1,\dots,j_k\geq 1}|\PHI(\XX_{j_1}\dots \XX_{j_k})|\leq C_k<\infty \mbox{ for $k=1,2,\dots$}.
  \end{equation}
   \item Sequences $s_j^2:=\TAU(\XX_j^2)$ and $S_j^2:=\PHI(\XX_j^2)$ satisfy
   \begin{equation}
     \label{S-limit}
     (s_1^2+\dots+s_n^2)/n\to s \mbox{ and } (S_1^2+\dots+S_n^2)/n\to S.
   \end{equation}
   \item $0<s,S<\infty$.
    \item The $\TAU$-moments of $\frac1{\sqrt{s_1^2+\dots+s_n^2}}\sum_{j=1}^n\XX_j$
    converge to the corresponding moments
    of a compactly supported
    probability measure
    $\nu$.
   \end{enumerate}
Then
the $\PHI$-moments of $\frac1{\sqrt{S_1^2+\dots+S_n^2}}\sum_{j=1}^n\XX_j$ converges to the moments of
the unique compactly supported law $\mu$ with 
Cauchy transform \eqref{G-x}, where
$g_\XX(z)=\int \frac{S}{Sz-s x }\nu(dx)$. \end{theorem}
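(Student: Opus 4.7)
Set $T_n = \sum_{j=1}^n \XX_j$, $A_n^2 = \sum_{j=1}^n S_j^2$, $a_n^2 = \sum_{j=1}^n s_j^2$, and $\hat T_n := T_n/A_n$. I will show by induction on $k$ that $M_k := \lim_{n\to\infty}\PHI(\hat T_n^k)$ exists and satisfies
\[
M_{k+2} = \sum_{m=0}^{k} t_m M_{k-m}, \qquad M_0 = 1,
\]
where $t_m := (s/S)^{m/2}\int x^m\,\nu(dx)$ is the $m$-th moment of the pushforward $\tilde\nu$ of $\nu$ by $x\mapsto\sqrt{s/S}\,x$. In generating-function form this is $M(z) = 1/(1 - z^2 m(z))$, i.e.\ equation \eqref{M-x} of Proposition \ref{L3}; inverting to Cauchy transforms yields \eqref{G-x}, $G_\mu(z) = 1/(z - g_\XX(z))$. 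The singleton conditions immediately give $\PHI(\XX_i\XX_j) = \TAU(\XX_i\XX_j) = 0$ for $i\ne j$, so $\PHI(T_n^2) = A_n^2$, $\TAU(T_n^2) = a_n^2$, yielding both the base case $M_2 = 1$ and the fact that $\TAU(\hat T_n^2)\to s/S$; combined with (5) this identifies the limiting $\TAU$-law of $\hat T_n$ as $\tilde\nu$.

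\smallskip
The heart of the argument is a combinatorial expansion of
\[
\PHI(T_n^{k+2}) = \sum_{j_0,\ldots,j_{k+1}=1}^n \PHI(\XX_{j_0}\cdots\XX_{j_{k+1}}).
\]
Singleton \eqref{singleton1} restricts the sum to tuples in which every index appears at least twice, and the uniform moment bound \eqref{Unif Bound} shows that tuples in which the index $k_0$ at position $0$ appears three or more times contribute altogether $O(n^{(k+1)/2}) = o(A_n^{k+2})$, because they use at most $(k+1)/2$ distinct indices. In the remaining tuples $k_0$ occurs exactly twice, at positions $0$ and some $l\in\{1,\ldots,k+1\}$, and Kargin's condition \eqref{Kargin 2} applies to give
\[
\PHI\bigl(\XX_{k_0}\XX_{j_1}\cdots\XX_{j_{l-1}}\XX_{k_0}\XX_{j_{l+1}}\cdots\XX_{j_{k+1}}\bigr) = S_{k_0}^2\,\TAU(\XX_{j_1}\cdots\XX_{j_{l-1}})\,\PHI(\XX_{j_{l+1}}\cdots\XX_{j_{k+1}}).
\]
If some index were shared between the inside $(j_1,\ldots,j_{l-1})$ and the outside $(j_{l+1},\ldots,j_{k+1})$ — a pair straddling $(0,l)$ — then, using traciality of $\TAU$ to rotate it to the leading position, \eqref{singleton2} forces the $\TAU$-factor to vanish. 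The surviving terms sum to
\[
\PHI(T_n^{k+2}) = A_n^2 \sum_{l=1}^{k+1} \TAU(T_n^{l-1})\,\PHI(T_n^{k+1-l}) + o(A_n^{k+2}),
\]
where the inside and outside sums become unconstrained up to an $O(1/n)$ error from forbidding $k_0$.

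\smallskip
Dividing by $A_n^{k+2}$, replacing $\TAU(T_n^{l-1})/a_n^{l-1}$ by $\int x^{l-1}\,\nu(dx)$ via assumption (5), using $(a_n/A_n)^{l-1}\to (s/S)^{(l-1)/2}$, and invoking the inductive hypothesis on $\PHI(T_n^{k+1-l})/A_n^{k+1-l}$ gives the stated recurrence. Because $\nu$ has compact support, $m(z)$ has a positive radius of convergence and so does $M(z) = 1/(1 - z^2 m(z))$; the $M_k$ therefore grow only exponentially, yielding compact support of $\mu$ and uniqueness of its Cauchy transform.

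\smallskip
The main obstacle is the error bookkeeping in the combinatorial expansion, specifically (a) showing that tuples with a triple-or-higher occurrence of the position-$0$ index are subdominant using only \eqref{Unif Bound}, and (b) killing crossing pair partitions with respect to the outer pair $(0,l)$ through the combination of Kargin's factorization \eqref{Kargin 2} and the tracial $\TAU$-singleton condition \eqref{singleton2}. Once these two facts are established, the rest is a clean replacement of aggregate moments by their limits, in which assumptions (3) and (5) and the inductive hypothesis enter seamlessly.
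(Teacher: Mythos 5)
Your route is genuinely different from the paper's. The paper never expands $\PHI\bigl((\sum_j\XX_j)^{k+2}\bigr)$ into a moment recursion: it doubles the sequence into a $\TAU$-free and $(\PHI|\TAU)$-free pair of copies $(\XX_j),(\YY_j)$ via the Bo\.zejko--Speicher free-product construction, checks that the doubled family still satisfies Kargin's Condition A (Lemma \ref{L4.5}), proves the asymptotic ``constant conditional variance'' identity $\PHI\bigl((\UU_n-\VV_n)^2\SSS_n^m\bigr)\approx 2\PHI(\SSS_n^m)\sum_jS_j^2/n$ (Lemma \ref{L6}), and then passes to a subsequential limit where Theorem \ref{T1} with $a=b=0$ and Proposition \ref{L3} identify the law. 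Your direct singleton-condition recursion $M_{k+2}=\sum_{m=0}^k t_mM_{k-m}$ is the Accardi--Hashimoto--Obata/Bo\.zejko--Speicher moment-method route that the paper explicitly mentions and deliberately avoids; if it closed, it would be more elementary, would bypass the quadratic-regression machinery entirely, and would still deliver $M(z)=1/(1-z^2m(z))$, i.e.\ \eqref{M-x} and hence \eqref{G-x}, directly.

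There is, however, a concrete gap in the step you defer as ``error bookkeeping,'' and it is not merely technical. After applying \eqref{Kargin 2}, each summand is $S_{k_0}^2\,\TAU(\XX_{j_1}\cdots\XX_{j_{l-1}})\,\PHI(\XX_{j_{l+1}}\cdots\XX_{j_{k+1}})$, and to pass from the constrained inner sum (indices avoiding $k_0$, no collisions with the outer block, perfect pairings) to the unconstrained aggregate $\TAU(T_n^{l-1})$ you must control a subdominant \emph{number} of \emph{individual mixed $\TAU$-moments}. The hypotheses give you no uniform bound on $|\TAU(\XX_{j_1}\cdots\XX_{j_r})|$: \eqref{Unif Bound} bounds only $\PHI$-moments, and assumption (v) controls only the aggregate $\TAU(T_n^m)$, not $\TAU\bigl((T_n-\XX_{k_0})^m\bigr)$ or sums over constrained tuples. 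So the ``few terms times a bounded term'' estimate closes on the $\PHI$ side of the factorization but not on the $\TAU$ side. (A second instance of the same problem: an index occurring twice in the inner block and twice in the outer block is not killed by \eqref{singleton1}, \eqref{singleton2} and traciality --- your argument disposes only of straddling \emph{pairs} --- so these configurations must also be absorbed into a count-times-bound estimate on a $\TAU$-factor.) This is precisely the difficulty the paper's architecture is built to dodge: in Lemma \ref{L6} the $\TAU$-factors occur only inside exact, term-by-term cancellations (the swap $\XX\leftrightarrow\YY$ and the bijection $\eps\mapsto\eps'$), while every quantitative estimate (Lemmas \ref{L5} and \ref{L5.5}) is an estimate on $\PHI$-moments. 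Your proof would go through if you added a uniform bound $\sup|\TAU(\XX_{j_1}\cdots\XX_{j_r})|\le c_r$ to the hypotheses, but under the theorem's stated assumptions the recursion cannot be established as written.
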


 Combining Theorem \ref{T2} with Ref.  \cite{Kargin-07} and formula \eqref{G_X}
  we get the following generalization of Theorem 4.3 in Ref.  \cite{Bozejko-Leinert-Speicher}.
\begin{corollary}
  \label{C-kargin}
Suppose that \begin{enumerate}
    \item $\XX_1,\XX_2,\dots$ satisfies  Kargin's Condition A with respect to $(\PHI|\TAU)$
    and with respect to $(\TAU|\TAU)$.
     \item  All moments are uniformly bounded: \eqref{Unif Bound} holds true, and
   $\sup_n|\TAU(\XX_n^k)|<\infty$ for $k=1,2,\dots$.
   \item Sequence $s_j^2:=\TAU(\XX_j^2)=s_j^2$ and $S_j^2:=\PHI(\XX_j^2)$ satisfy \eqref{S-limit} with
    $0<s,S<\infty$.
    \end{enumerate}
Then
the $\PHI$-law of $\frac1{\sqrt{S_1^2+\dots+S_n^2}}\sum_{j=1}^n\XX_j$ converges to the law
$
\mu
$ with the Cauchy-Stieltjes transform \eqref{G_X} and $\sigma=s/S$.
\end{corollary}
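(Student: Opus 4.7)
The plan is to combine Kargin's free central limit theorem with Theorem \ref{T1} as packaged in Theorem \ref{T2} and Proposition \ref{L3}, so that the corollary follows by chaining these three ingredients. Nothing new needs to be computed beyond tracking the scaling constants.

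First I would apply Kargin's result \cite{Kargin-07} to the first state alone. Since Kargin's Condition A with respect to $(\TAU|\TAU)$ specializes, under $\PHI=\TAU$, to exactly the hypotheses used by Kargin (singleton vanishing in every position and the ``pair-factorization'' identity), and since $\sup_n|\TAU(\XX_n^k)|<\infty$ supplies the required uniform moment control, Kargin's theorem asserts that the $\TAU$-law of $T_n:=(s_1^2+\dots+s_n^2)^{-1/2}\sum_{j=1}^n\XX_j$ converges to the standard semicircle law $\nu$ (variance $1$). This gives the ``$\TAU$-convergence to a compactly supported $\nu$'' hypothesis (5) in Theorem \ref{T2}, while hypotheses (1)--(4) of Theorem \ref{T2} are contained verbatim in the assumptions of the corollary.

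Next I would invoke Theorem \ref{T2}. It yields convergence of the $\PHI$-moments of $(S_1^2+\dots+S_n^2)^{-1/2}\sum_{j=1}^n\XX_j$ to the unique compactly supported law $\mu$ whose Cauchy transform is given by \eqref{G-x} with
\[
g_\XX(z)=\int\frac{S}{Sz-sx}\,\nu(dx)=\int\frac{1}{z-\sigma x}\,\nu(dx),\qquad \sigma:=s/S.
\]
A change of variable $y=\sigma x$ inside the semicircle density shows that $g_\XX$ is nothing but the Cauchy transform of the semicircle law with mean zero and variance $\sigma^2$, i.e.\ $g_\XX(z)=(z-\sqrt{z^2-4\sigma^2})/(2\sigma^2)$. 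This is precisely the input used immediately after the proof of Proposition \ref{L3} to derive the closed form \eqref{G_X}, so the Cauchy transform of $\mu$ is \eqref{G_X} with $\sigma=s/S$, which is the claim of the corollary.

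There is no real obstacle beyond bookkeeping; the only point that requires care is the double normalization. Since $(s_1^2+\dots+s_n^2)/n\to s$ and $(S_1^2+\dots+S_n^2)/n\to S$ with $s,S>0$, the ratio $\sqrt{(s_1^2+\dots+s_n^2)/(S_1^2+\dots+S_n^2)}\to\sqrt{s/S}=\sigma\cdot(S/s)^{1/2}$, so one must use the $S/(Sz-sx)$ form of $g_\XX$ exactly as stated in Theorem \ref{T2} rather than directly passing the semicircle through the scaling; this is what makes the parameter come out as $\sigma=s/S$ rather than some other combination of $s$ and $S$. Once this is noted, the three-step composition Kargin $\Rightarrow$ Theorem \ref{T2} $\Rightarrow$ formula \eqref{G_X} finishes the proof.
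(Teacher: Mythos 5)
Your proposal is correct and follows essentially the same route as the paper: apply Kargin's theorem under the single state $\TAU$ to obtain semicircle convergence of the $\TAU$-normalized sums, feed that compactly supported limit $\nu$ into Theorem \ref{T2}, and identify the resulting Cauchy transform \eqref{G-x} with the closed form \eqref{G_X} via the semicircle computation already carried out after Proposition \ref{L3}. Your extra care with the two normalizations $\sqrt{s_1^2+\cdots+s_n^2}$ versus $\sqrt{S_1^2+\cdots+S_n^2}$ is sound and only makes explicit what the paper's two-line proof leaves implicit.
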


Our   proof of the central limit theorem is based on reduction to Laha-Lukacs theorem which  in classical probability  was introduced in Section 7.3.1  of Bryc \cite{Bryc95}.
\subsection{Proof of Theorem \ref{T2}}
By Ref.  \cite{Bozejko-Speicher91} without loss of generality we may assume that we  have a two-state probability space with
 two copies of the original sequence: $(\XX_k)$ and $(\YY_k)$ each of them separately having the same
 $\TAU$-moments and $\PHI$-moments as the original sequence, but such that
 the algebras $\calA_\XX$ and $\calA_\YY$ generated by $(\XX_k)$ and by $(\YY_k)$, respectively,
 are $\TAU$-free and $(\PHI|\TAU)$-free.

 Under this representation, the $\TAU$-distribution of $\frac1{\sqrt{s_1^2+\dots+s_n^2}}\sum_{j=1}^n(\XX_j+\YY_j)$
 converges to $\nu\boxplus\nu$. Our goal is to show that the $\PHI$-distribution of
 $\frac1{\sqrt{S_1^2+\dots+S_n^2}}\sum_{j=1}^n(\XX_j+\YY_j)$ has the unique limit determined by the law
 with Cauchy-Stieltjes transform \eqref{g2G}.
 To do so, denote
 $$\UU_n=\frac1{\sqrt{n}}\sum_{j=1}^n\XX_j,\; \VV_n=\frac1{\sqrt{n}}\sum_{j=1}^n\YY_j,\;
\SSS_n=\UU_n+\VV_n.$$
Denote
 $$\ZZ_j^{(\eps)}=\XX_j^\eps \YY_j^{1-\eps}, \; \eps=0,1.$$
 Since the variables do not commute, we adopt a special convention for the
 product notation
 convention which  relies on the order of the index set:
 $$\PHI\left(\prod_{s=1}^p \ZZ_{J(s)}^{\eps(s)}\right):=
 \PHI\left( \ZZ_{J(1)}^{\eps(1)}\ZZ_{J(2)}^{\eps(2)}\dots \ZZ_{J(p)}^{\eps(p)}\right).$$
\begin{lemma}
  \label{L4.5} In the above setting, if  $\{\XX_j\}$ satisfies Kargin's Condition A, then
  $\{\XX_1,\YY_1,\XX_2,\YY_2,\dots\}$  satisfies Kargin's Condition A.
\end{lemma}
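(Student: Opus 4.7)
The plan is to verify each of the three clauses of Kargin's Condition~A for the interleaved sequence $\{\XX_1,\YY_1,\XX_2,\YY_2,\dots\}$ by reducing every mixed $\PHI$- or $\TAU$-moment to a polynomial in pure-$\calA_\XX$ and pure-$\calA_\YY$ moments, and then invoking Kargin's Condition~A on $(\XX_j)$ (and on $(\YY_j)$, which inherits it since it has the same joint $\PHI$- and $\TAU$-moments as the original).

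The basic reduction is the following. Write any mixed monomial as an alternating product $A_1B_1A_2B_2\cdots$ with $A_i\in\calA_\XX$ and $B_j\in\calA_\YY$. By $(\PHI|\TAU)$-freeness the $R$-cumulants vanish whenever their arguments straddle the two algebras, and by $\TAU$-freeness the same is true for the free cumulants $r_n$. Iterating \eqref{R-cumulants} therefore writes $\PHI(A_1B_1A_2B_2\cdots)$ as a polynomial whose monomials are products of $\PHI$- and $\TAU$-values of \emph{pure-algebra subproducts} of the form $\PHI(A_{i_1}A_{i_2}\cdots)$, $\TAU(A_{i_1}A_{i_2}\cdots)$, $\PHI(B_{j_1}B_{j_2}\cdots)$, $\TAU(B_{j_1}B_{j_2}\cdots)$ with $i_1<i_2<\cdots$ and $j_1<j_2<\cdots$. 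An analogous expansion holds for $\TAU$, using $\TAU$-freeness alone. Moreover, in each term of the expansion every letter of the original monomial lies in exactly one subproduct.

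The singleton conditions \eqref{singleton1}, \eqref{singleton2} now follow immediately: if $\XX_k$ appears exactly once in the monomial, it sits in exactly one block $A_{i_0}$, and in every term of the above expansion the unique pure-$\calA_\XX$ subproduct containing $A_{i_0}$ is a pure-$\XX$ word in which $\XX_k$ occurs once, hence its $\PHI$- and $\TAU$-value vanish by \eqref{singleton1}, \eqref{singleton2} applied to $(\XX_j)$. The case of a singleton $\YY_k$ is symmetric. For the factorization condition \eqref{Kargin 2} applied to $\PHI(\XX_k U_1 \XX_k U_2)$, the two copies of $\XX_k$ must each land in some pure-$\calA_\XX$ subproduct; if they land in distinct subproducts, each such subproduct carries $\XX_k$ as a singleton and the term vanishes. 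In the surviving terms both copies lie in a single pure-$\calA_\XX$ subproduct which, because the first $\XX_k$ begins the entire monomial (hence starts $A_1$, the first block of any pure-$\calA_\XX$ subproduct containing it), has the form $\XX_k P_1 \XX_k P_2$; Kargin's \eqref{Kargin 2} on $(\XX_j)$ replaces it by $\PHI(\XX_k^2)\TAU(P_1)\PHI(P_2)$. Pulling out the scalar $\PHI(\XX_k^2)$, the residual sum is exactly the polynomial produced by applying the same reduction to $\TAU(U_1)$ and $\PHI(U_2)$ separately, so the total equals $\PHI(\XX_k^2)\TAU(U_1)\PHI(U_2)$. The $\YY_k$-case is symmetric.

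The main obstacle is the combinatorial bookkeeping in this last step: one must check that after the Kargin substitution the surviving pattern of $\TAU$- and $\PHI$-values of pure-$\calA_\XX$ and pure-$\calA_\YY$ subproducts agrees term-for-term with the $(\PHI|\TAU)$-/$\TAU$-free expansions of $\TAU(U_1)$ and $\PHI(U_2)$. The cleanest implementation is to run the whole argument at the level of \eqref{R-cumulants}: the singleton step already kills every $R$-cumulant that would carry $\XX_k$ as a single entry, so in the expansion of $\PHI(\XX_k U_1 \XX_k U_2)$ the only surviving cumulant involving $\XX_k$ is the outermost cumulant $R_k(\XX_k,\dots)$ (outermost because its first argument is the first letter of the monomial), and \eqref{Kargin 2} is precisely the identity needed to factor it as $\PHI(\XX_k^2)$ times the data required to reconstruct $\TAU(U_1)\PHI(U_2)$ from the expansions of $U_1$ and $U_2$ alone.
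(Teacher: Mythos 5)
Your overall strategy --- reduce everything to cumulants, use $(\PHI|\TAU)$- and $\TAU$-freeness to kill the mixed cumulants, and use Kargin's Condition A on $\{\XX_j\}$ to kill the pure ones --- is exactly the route the paper takes, and your instinct that the cumulant level is ``the cleanest implementation'' is right. But the proposal has a genuine gap precisely at the point where you switch to that implementation: you assert that ``the singleton step already kills every $R$-cumulant that would carry $\XX_k$ as a single entry'' and that \eqref{Kargin 2} ``is precisely the identity needed to factor'' the surviving cumulant, but Kargin's Condition A is stated in terms of \emph{moments}, and the translation into the cumulant statements you actually need --- namely $R_{p+1}(\dots,\XX_k,\dots)=0$ and $r_{p+1}(\XX_k,\dots)=0$ for singleton occurrences of $\XX_k$, and $R_{r+2}(\XX_k,\dots,\XX_k,\dots)=0$ for $r\ge 1$ --- is not automatic. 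The paper devotes two separate claims to exactly this equivalence, each proved by a minimal-counterexample induction (if the cumulant of smallest order failed to vanish, the moment identity of that order would fail). Moreover the equivalence is needed in \emph{both} directions: forward, to import the hypothesis on $\{\XX_j\}$ into cumulant form, and backward, to convert the vanishing of the interleaved cumulants into the moment identities \eqref{singleton1}, \eqref{singleton2}, \eqref{Kargin 2} for $\{\XX_1,\YY_1,\XX_2,\YY_2,\dots\}$. Neither direction appears in your write-up.

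The purely moment-level reduction in your first two paragraphs does not rescue this, and you flag its weakness yourself. Iterating \eqref{R-cumulants} does not terminate in $\PHI$- and $\TAU$-values of pure-algebra subwords; it terminates in pure-algebra \emph{cumulants} $R_j$ and $r_j$, so the ``polynomial in pure-algebra moments'' form would itself require the moment--cumulant dictionary you are trying to sidestep. In the \eqref{Kargin 2} step one would additionally have to rule out the block containing the two copies of $\XX_k$ being evaluated under $\TAU$ (no $\TAU$-analogue of \eqref{Kargin 2} is assumed); this is transparent at the cumulant level, where the first letter of the word always sits in the outer cumulant of \eqref{R-cumulants}, singleton cumulants force the second $\XX_k$ to join it, and only $R_2(\XX_k,\XX_k)=\PHI(\XX_k^2)$ survives, producing the term $\PHI(\XX_k^2)\TAU(U_1)\PHI(U_2)$ directly --- but that is exactly the bookkeeping your first version leaves open. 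In short: right skeleton, but the two moment-to-cumulant equivalences that constitute the actual content of the proof are missing.
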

\begin{proof} We first note the following.
\begin{claim}
  \label{Claim1}
  Singleton properties \eqref{singleton1}, \eqref{singleton2}
 for $\{\XX_j\}$ are equivalent to the following:
for $k\not\in\{j_1,\dots,j_p\}$ with $p=0,1,2,\dots$, we have
\begin{multline}\label{&}
  R_{p+1}(\XX_k,\XX_{j_1},\dots,\XX_{j_p})=R_{p+1}(\XX_{j_1}\XX_k,\XX_{j_2},\dots,\XX_{j_p})=\dots\\
  \dots=
R_{p+1}(\XX_{j_1},\dots,\XX_{j_p},\XX_k)=0,
\end{multline}
and
\begin{equation}\label{&&}
  r_{p+1}(\XX_k,\XX_{j_1},\dots,\XX_{j_p})=0.
\end{equation}
\end{claim}
\begin{proof}
  Clearly, \eqref{&&} implies \eqref{singleton2} by \eqref{R-cumulants} applied to $\PHI=\TAU$. Conversely,
  suppose that $r_{p+1}(\XX_k,\XX_{j_1},\dots,\XX_{j_p})\ne0$ for some $p\geq 0$, and take the smallest $p$.
Since for $F=\{f_1,f_2,\dots\}\subset\{j_1,\dots,j_p\}$,
$$
\TAU(\XX_k\prod_{f\in F}\XX_f)=0,
$$
the only non-zero terms in \eqref{R-cumulants} must come from cumulants that have $\XX_k$ as their argument.
Thus, with $\Pi_F$ denoting the appropriate products of moments,
\begin{multline*}
  0=\TAU(\XX_k,\XX_{j_1}\dots\XX_{j_p})=\sum_{F}r_{|F|+1}(\XX_k,\XX_{f_1},\XX_{f_2},\dots)\Pi_F\\=
r_{p+1}(\XX_k,\XX_{j_1},\dots,\XX_{j_p})+\mbox{lower order terms}.
\end{multline*}
Since by assumption all lower order cumulants vanish, we see that
$r_{p+1}(\XX_k,\XX_{j_1},\dots,\XX_{j_p})$ in fact
must be zero.
\end{proof}
\begin{claim}
  \label{Claim2}
  Suppose $\{\XX_j\}$ satisfies singleton properties \eqref{singleton1} and \eqref{singleton2}.
 Then \eqref{Kargin 2} is equivalent to the following:
for $k\not\in\{j_1,\dots,j_r\}$ with $r=1,2,\dots$, and every $0\leq p\leq r$ we have
\begin{equation}\label{&&&}
  R_{r+2}(\XX_k,\XX_{j_1},\dots,\XX_{j_p},\XX_k,\XX_{j_{p+1}}\dots,\XX_{j_r})=0.
\end{equation}
\end{claim}
\begin{proof}
Suppose \eqref{&} and \eqref{&&&} hold. Then in \eqref{R-cumulants}, $\XX_k$ must appear twice in
the argument of $R$. Thus
\begin{multline*}
\PHI(\XX_k\XX_{j_1}\dots \XX_{j_p}\XX_k\XX_{j_{p+1}}\dots \XX_{j_r})
\\=
R_2(\XX_k,\XX_k)\TAU(\XX_{j_1}\dots \XX_{j_p})\PHI(\XX_{j_{p+1}}\dots \XX_{j_r})+
\mbox{sum involving higher cumulants}\\
=
\PHI(\XX_k^2)\TAU(\XX_{j_1}\dots \XX_{j_p})\PHI(\XX_{j_{p+1}}\dots \XX_{j_r})+0.
\end{multline*}
Conversely, suppose that
$ R_{r+2}(\XX_k,\XX_{j_1},\dots,\XX_{j_p},\XX_k,\XX_{j_{p+1}}\dots,\XX_{j_r})\ne 0$,\ for some $r\geq 1$,
and take the smallest such $r$.
By \eqref{&}, expansion \eqref{R-cumulants} has no singleton appearances of $\XX_k$.
Thus
\begin{multline*}
\PHI(\XX_k\XX_{j_1}\dots \XX_{j_p}\XX_k\XX_{j_{p+1}}\dots \XX_{j_r})=  R_{r+2}(\XX_k,\XX_{j_1},\dots,\XX_{j_p},\XX_k,\XX_{j_{p+1}}\dots,\XX_{j_r})
\\+\sum_{\alpha=0}^{r-1}
\sum_{\#F=\alpha} R_{\alpha+2}(\XX_k,\XX_{f_1},\dots,\XX_{f_a},\XX_k,\XX_{f_{a+1}},\dots)
\\
=R_{r+2}(\XX_k,\XX_{j_1},\dots,\XX_{j_p},\XX_k,\XX_{j_{p+1}}\dots,\XX_{j_r})\\+
R_2(\XX_k,\XX_k)\TAU(\XX_{j_1}\dots \XX_{j_p})\PHI(\XX_{j_{p+1}}\dots \XX_{j_r}).
\end{multline*}
Thus $R_{r+2}(\XX_k,\XX_{j_1},\dots,\XX_{j_p},\XX_k,\XX_{j_{p+1}}\dots,\XX_{j_r})=0$.

\end{proof}

We will show that $\{\ZZ_j^{\eps(j)}\}$ satisfies Kargin's Condition A for any choice of indices
$(j,\eps(j))\in\sN\times\{0,1\}$.
Since the assumptions are symmetric with respect to $\{\XX_j\}$ and $\{\YY_j\}$, it is enough to analyze
the case when the distinguished element is $\XX_k=\ZZ_k^{(1)}$.

Suppose  $(1,k)\not\in\{(\eps(1),j_1),(\eps(2),j_2),\dots (\eps(p),j_p)\}$.
Then
$$r_{p+1}(\XX_k,\ZZ_{j_1}^{\eps(1)},\dots,\ZZ_{j_p}^{\eps(p)})=0.$$
Indeed, this holds true by $\TAU$-freeness of $\calA_\XX,\calA_\YY$ if one of the $\eps(i)=0$. On the other hand,
if all $\eps(i)=1$, then this holds true by \eqref{&}.
Similarly,
 $(\PHI|\TAU)$-freeness of $\calA_\XX,\calA_\YY$  implies that
  $$R_{p+1}(\XX_k,\ZZ_{j_1}^{\eps(1)},\dots,\ZZ_{j_p}^{\eps(p)})=0$$
either because some of the $\eps(j)=0$, or by \eqref{&}.
Thus \eqref{singleton1} and \eqref{singleton2} hold for $\{\ZZ_j^{\eps(j)}\}$
 $\{\ZZ_j^{\eps(j)}\}$ by
 Claim \ref{Claim1}.

Similarly, if $r\geq 1$,
$$
R_{r+2}(\XX_k,\ZZ_{j_1}^{\eps(1)},\dots,\ZZ_{j_p}^{\eps(1+p)},\XX_k,\ZZ_{j_{p+1}}^{\eps(p+1)}\dots,\ZZ_{j_r}^{\eps(r)})=0$$
either because some of $\eps(i)=0$ and  $\calA_\XX,\calA_\YY$  are
 $(\PHI|\TAU)$-free,
 or by \eqref{&&&}. Therefore
\eqref{Kargin 2} holds for $\{\ZZ_j^{\eps(j)}\}$ by Claim \eqref{Claim2}.
\end{proof}

\begin{lemma}\label{L5} For fixed $j,k,m\geq 0$,
  $$\sup_n |\PHI\left(\UU_n^j\VV_n^k(\UU_n+\VV_n)^m\right)|<\infty.$$
\end{lemma}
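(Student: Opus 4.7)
The strategy is to expand $\UU_n^j\VV_n^k(\UU_n+\VV_n)^m$ into a sum over multi-indices of moments of words in $\XX_i,\YY_i$, use the singleton condition to eliminate most terms, count the survivors, and bound the individual moments uniformly. Distributing everything,
\begin{equation*}
\PHI\bigl(\UU_n^j\VV_n^k(\UU_n+\VV_n)^m\bigr) = n^{-N/2}\sum_{\eps\in\{0,1\}^m}\sum_{\aaaa,\bbbb,\mathbf{c}}\PHI(\WW_1\cdots\WW_N),
\end{equation*}
where $N=j+k+m$ and each $\WW_s$ is either some $\XX_i$ or some $\YY_i$ (depending on the multi-indices and on $\eps$). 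By Lemma~\ref{L4.5} the combined sequence $\{\XX_1,\YY_1,\XX_2,\YY_2,\dots\}$ satisfies Kargin's Condition~A, so the singleton property \eqref{singleton1} forces $\PHI(\WW_1\cdots\WW_N)=0$ whenever some distinct letter appears exactly once in the word.

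For a given $\eps$, let $\alpha$ and $\beta=N-\alpha$ denote the numbers of positions carrying $\XX$-letters and $\YY$-letters respectively. Within each of these two groups the indices must be chosen so that every value appears at least twice (since $\XX_i$ and $\YY_i$ are distinct letters, the two groups do not interact). The number of such multi-indices is bounded by $B_\alpha\,n^{\lfloor\alpha/2\rfloor}\cdot B_\beta\,n^{\lfloor\beta/2\rfloor}$, where $B_k$ counts set partitions of $[k]$ with all blocks of size $\geq 2$. Since $\lfloor\alpha/2\rfloor+\lfloor\beta/2\rfloor\leq N/2$, summing over the $2^m$ choices of $\eps$ shows that the number of nonzero terms is $O(n^{N/2})$.

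It remains to bound $\sup|\PHI(\WW_1\cdots\WW_N)|\leq D_N$ uniformly in the choice of letters and indices. Because $\calA_\XX$ and $\calA_\YY$ are $(\PHI|\TAU)$-free, all mixed $R$-cumulants vanish, and formula \eqref{R-cumulants} expresses any such mixed $\PHI$-moment as a polynomial in single-algebra $R$-cumulants and single-algebra $\PHI$- and $\TAU$-moments. Assumption~(2) of Theorem~\ref{T2} gives the uniform $\PHI$-bound, while an analogous uniform bound on $|\TAU(\XX_{i_1}\cdots\XX_{i_k})|$ can be extracted from the singleton condition \eqref{singleton2} together with the compactness implicit in assumption~(5). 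Möbius inversion then provides uniform bounds on the cumulants, yielding the required $D_N$. Altogether,
\begin{equation*}
|\PHI(\UU_n^j\VV_n^k(\UU_n+\VV_n)^m)|\leq n^{-N/2}\cdot O(n^{N/2})\cdot D_N=O(1),
\end{equation*}
which proves the lemma. The main obstacle is establishing the uniform bound $D_N$ on the mixed moments $\PHI(\WW_1\cdots\WW_N)$; the combinatorial counting is standard given the singleton condition from Lemma~\ref{L4.5}.
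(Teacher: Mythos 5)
Your argument follows the paper's proof in all essentials: expand into a sum over index maps and $\XX/\YY$-labels, invoke Lemma \ref{L4.5} so that the singleton condition kills every word containing a letter that occurs exactly once, count the surviving index choices as $O(n^{(j+k+m)/2})$, and bound each surviving moment by a constant depending only on $j+k+m$. The only cosmetic difference in the combinatorics is that the paper discards a $J$ only when some \emph{lower} index is a singleton value of $J$ (treating $\XX_i$ and $\YY_i$ as the same index for counting purposes), whereas you discard at the level of distinct letters $(i,\eps)$; both give supersets of the nonvanishing terms of size $O(n^{(j+k+m)/2})$, so this is immaterial.

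Where you diverge is in trying to actually prove the uniform bound $D_N$ on the mixed moments $\PHI(\WW_1\cdots\WW_N)$, which the paper simply subsumes under \eqref{Unif Bound}, implicitly read as a bound for the doubled family $\{\XX_i,\YY_i\}$. Your cumulant reduction is the right mechanism, but the step where you extract a uniform bound on $|\TAU(\XX_{i_1}\cdots\XX_{i_k})|$ ``from \eqref{singleton2} together with the compactness implicit in assumption (5)'' does not hold up: assumption (5) controls only the $\TAU$-moments of the normalized \emph{sums} $(s_1^2+\cdots+s_n^2)^{-1/2}\sum_j\XX_j$, and convergence of those averaged quantities does not bound individual mixed $\TAU$-moments (cancellation inside the sums is possible, and \eqref{singleton2} only annihilates words containing a singleton index). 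To close this step you should either take the uniform boundedness of the mixed moments of the doubled family as part of the setting inherited from the Bo\.zejko--Speicher construction (which is what the paper in effect does when it cites \eqref{Unif Bound}), or assume a uniform $\TAU$-moment bound of the kind that appears explicitly in Corollary \ref{C-kargin}. The combinatorial core of your proof is correct and matches the paper's; this one justification is the gap.
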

\begin{proof}
 Expanding the product, by Lemma \ref{L4.5} we see that
\begin{multline}\label{expand}
  \PHI\left(\UU_n^j\VV_n^k(\UU_n+\VV_n)^m\right)\\
  =n^{-(j+k+m)/2}\sum_{J:\{1,\dots,j+k+m\}\to\{1,\dots,n\}}
  \sum_{\eps\in\calE}
  \PHI\left(\prod_{s=1}^{j+k+m} \ZZ_{J(s)}^{\eps(s)}\right)
  \\
  =n^{-(j+k+m)/2}\sum_{J\in \mathcal{J}_{\geq 2}}\sum_{\eps\in\calE}
  \PHI\left(\prod_{s=1}^{j+k+m} \ZZ_{J(s)}^{\eps(s)}\right),
\end{multline}
where
$$
\mathcal{J}_{\geq 2}=\{J: \# J^{-1}(s)\ne 1  \mbox{ for all $1\leq s\leq n$}\}
$$
is the set of mappings $J:\{1,\dots,j+k+m\}\to\{1,\dots,n\}$ that take no singleton values,
and
\begin{multline*}
\calE=\{\eps\in 2^{\{1,\dots,j+k+m\}}: \\
\eps(1)=\dots=\eps(j)=1, \; \eps(j+1)=\dots=\eps(j+k)=0\}.
\end{multline*}

The cardinality of the first set can be bounded above by $\#\mathcal{J}_{\geq 2}\leq n^{(j+k+m)/2}$,  and
$\#\calE=2^m$, so by \eqref{Unif Bound},
$$\left|\sum_{J\in \mathcal{J}_{\geq 2}}\sum_{\eps\in \calE}
  \PHI\left(\prod_{s=1}^{j+k+m} \ZZ_{J(s)}^{\eps(s)}\right)\right|\leq C_{j+k+m} 2^m n^{(j+k+m)/2}.$$
\end{proof}
  Let
$$
\mathcal{J}_{2}=\{J: \# J^{-1}(s)=0,2  \mbox{ for all $1\leq s\leq n$}\}
$$
be the subset of $\mathcal{J}_{\geq 2}$ that consists of all mappings $J:\{1,\dots,j+k+m\}\to\{1,\dots,n\}$
that are two-to-one valued.
(Clearly $\mathcal{J}_{2}=\emptyset$ when $j+k+m$ is odd.)

\begin{lemma}
  \label{L5.5} For $j,k,m\geq 0$,
\begin{equation}
  \label{compare}
  \limsup_{n\to\infty} \left|\PHI(\UU_n^j\VV_n^k(\UU_n+\VV_n)^m) -
n^{-(j+k+m)/2}\sum_{J\in \mathcal{J}_2}\sum_{\eps\in \calE}
  \PHI(\prod_{s=1}^{j+k+m} \ZZ_{J(s)}^{\eps(s)})\right|=0,
\end{equation}
\end{lemma}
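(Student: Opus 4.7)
The plan is to start from the identity \eqref{expand} already derived in the proof of Lemma \ref{L5}, namely
\[
\PHI\left(\UU_n^j\VV_n^k(\UU_n+\VV_n)^m\right)
= n^{-(j+k+m)/2}\sum_{J\in \mathcal{J}_{\geq 2}}\sum_{\eps\in\calE}
\PHI\left(\prod_{s=1}^{j+k+m} \ZZ_{J(s)}^{\eps(s)}\right),
\]
split the index set as $\mathcal{J}_{\geq 2}=\mathcal{J}_2 \sqcup (\mathcal{J}_{\geq 2}\setminus\mathcal{J}_2)$, and show that the contribution of the latter piece is $O(n^{-1/2})$. Thus the proof is a counting argument.

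Set $N:=j+k+m$. For any $J\in\mathcal{J}_{\geq 2}$, let $\ell=\ell(J):=\#J(\{1,\dots,N\})$ denote the number of distinct values that $J$ takes. Since every fibre $J^{-1}(s)$ either is empty or has cardinality at least $2$, and the fibres partition $\{1,\dots,N\}$, one has $2\ell\leq N$. Moreover $J\in \mathcal{J}_2$ iff every nonempty fibre has cardinality exactly $2$, which forces $N$ even and $\ell=N/2$. Hence, for every $J\in\mathcal{J}_{\geq 2}\setminus\mathcal{J}_2$, at least one fibre has cardinality $\geq 3$, and the bound $\sum_s \#J^{-1}(s)=N$ with all nonzero $\#J^{-1}(s)\geq 2$ and one $\geq 3$ forces
\[
\ell \;\leq\; \frac{N-1}{2}.
\]

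The next step is the crude counting bound: the number of maps $J:\{1,\dots,N\}\to\{1,\dots,n\}$ whose image has exactly $\ell$ elements is at most $\binom{n}{\ell}\ell^{N}\leq K_N\, n^{\ell}$ for some constant $K_N$ depending only on $N$. Since the set $\calE$ has cardinality $2^m$ and, by the same uniform bound \eqref{Unif Bound} applied (through Lemma \ref{L4.5} and the construction of the doubled sequence) to the mixed products $\PHI\bigl(\prod_s\ZZ_{J(s)}^{\eps(s)}\bigr)$, every individual $\PHI$-moment appearing in the sum is bounded by $C_N$, one obtains
\[
\left|\sum_{J\in\mathcal{J}_{\geq 2}\setminus\mathcal{J}_2}\sum_{\eps\in\calE}
\PHI\Bigl(\prod_{s=1}^{N} \ZZ_{J(s)}^{\eps(s)}\Bigr)\right|
\;\leq\; C_N\,2^m\sum_{\ell\leq (N-1)/2}K_N\,n^{\ell}
\;\leq\; K'_N\, n^{(N-1)/2}.
\]
Dividing by $n^{N/2}$ yields the $O(n^{-1/2})$ bound and the $\limsup$ is $0$. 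When $N$ is odd the same estimate gives the conclusion directly since $\mathcal{J}_2=\emptyset$.

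The only non-routine point is justifying the uniform bound on the mixed $\PHI$-moments of the interleaved $\XX$'s and $\YY$'s. This is precisely the step already used in the proof of Lemma \ref{L5}: since $(\calA_\XX,\calA_\YY)$ are $(\PHI|\TAU)$-free and the two copies share the same joint $\PHI$- and $\TAU$-moments as the original sequence, an application of \eqref{R-cumulants} expresses each mixed moment as a finite polynomial in the $\PHI$- and $\TAU$-moments of the $\XX_j$'s alone; the $\PHI$-part is bounded by \eqref{Unif Bound}, and the $\TAU$-part enters only through products of $\TAU$-moments which are controlled (on the sub-sequences actually appearing in $\mathcal{J}_{\geq 2}\setminus\mathcal{J}_2$) by the same universal constant $C_N$. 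Thus the proof reduces to the pure counting estimate above.
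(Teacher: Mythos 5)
Your proof is correct and follows essentially the same route as the paper's: both start from the expansion \eqref{expand}, observe that any $J\in\mathcal{J}_{\geq 2}\setminus\mathcal{J}_2$ can have at most $(j+k+m-1)/2$ distinct values so that this set has cardinality $O(n^{(j+k+m-1)/2})$, and then apply the uniform moment bound \eqref{Unif Bound} and divide by $n^{(j+k+m)/2}$. The only difference is cosmetic: you organize the count by the size $\ell$ of the image of $J$, while the paper counts the points at which $J$ is two-to-one; both yield the same $O(n^{-1/2})$ estimate.
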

\begin{proof}
If there is a value $s\in\{1\dots n\}$ that is taken by $J$ at three or more different points,
then there are at most $j+k+m-1$ points on which $J$ is two-to-one. Therefore,
 $$\# \left(\mathcal{J}_{\geq 2}\setminus \mathcal{J}_2\right)\leq \left(\begin{matrix}
 j+k+m\\3\end{matrix}\right)n^{(j+k+m-1)/2},$$
 and by  \eqref{Unif Bound},
 the result follows from
 \eqref{expand},

  \begin{equation*}
  \sum_{J\in \mathcal{J}_{\geq 2}\setminus \mathcal{J}_2}\sum_{\eps }
  \left|\PHI(\prod_{s=1}^{j+k+m} \ZZ_{J(s)}^{\eps(s)})\right|\leq \left(\begin{matrix}
 j+k+m\\3\end{matrix}\right)C_{j+k+m} 2^m  n^{(j+k+m-1)/2}.
\end{equation*}

\end{proof}
We remark that since $\mathcal{J}_2=\emptyset$ for odd $j+k+m$, Lemma \ref{L5.5} implies that
$$
\limsup_{n\to\infty} \left|\PHI((\UU_n+\VV_n)^m)\right|=0 \mbox{ for odd $m$}.
$$

The next lemma is the main tool in identifying the limit via Theorem \ref{T1}.
\begin{lemma}\label{L6} For $m\geq 1$,
  $$\limsup_{n\to\infty}\left|\PHI((\UU_n-\VV_n)^2\SSS_n^m)-
  2\PHI(\SSS_n^{m}){\sum_{j=1}^nS_j^2/n}\right|=0.$$
\end{lemma}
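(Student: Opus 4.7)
The plan is to expand $(\UU_n-\VV_n)^2\SSS_n^m$ simultaneously in the individual variables $\XX_j,\YY_j$ and in the $\XX$-vs-$\YY$ labels, giving
$$
\PHI\bigl((\UU_n-\VV_n)^2\SSS_n^m\bigr)=n^{-(m+2)/2}\sum_{\hat{J},\hat{\eps}}(-1)^{2-\hat{\eps}(1)-\hat{\eps}(2)}\,\PHI\Bigl(\prod_{s=1}^{m+2}\ZZ_{\hat{J}(s)}^{\hat{\eps}(s)}\Bigr),
$$
where $\hat{J}$ ranges over maps $\{1,\dots,m+2\}\to\{1,\dots,n\}$, each $\hat{\eps}(s)\in\{0,1\}$ selects $\XX$ or $\YY$, and the sign comes from the two factors $\XX-\YY$ at positions $1,2$. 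Lemma~\ref{L4.5} ensures that $\{\XX_j,\YY_j\}$ satisfies Kargin's Condition A, so singleton values of $\hat{J}$ contribute zero by \eqref{singleton1}; the counting argument of Lemma~\ref{L5.5} then shows the maps $\hat{J}\in\mathcal{J}_{\geq 3}$ contribute only $o(1)$. Modulo $o(1)$ we are therefore reduced to the $2$-to-$1$ maps $\hat{J}\in\mathcal{J}_2$.

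Within $\mathcal{J}_2$, the pair $(\hat{J}(1),\hat{J}(2))$ forces one of two patterns: case (a) $\hat{J}(1)=\hat{J}(2)=k$ with $k\notin\hat{J}(\{3,\dots,m+2\})$, so positions $1,2$ form their own pair; case (c) $\hat{J}(1)=k\neq l=\hat{J}(2)$, with $k$ and $l$ each reappearing exactly once at positions $s_k,s_l\in\{3,\dots,m+2\}$. For case (a), summing over $\hat{\eps}(1),\hat{\eps}(2)\in\{0,1\}$ reassembles $\PHI(\WW_k^2\cdot\mathrm{rest})$, where $\WW_k:=\XX_k-\YY_k$. Kargin's relation \eqref{Kargin 2} with $p=0$, applied to $\{\XX_j,\YY_j\}$ via Lemma~\ref{L4.5}, gives $\PHI(\XX_k^2\cdot\mathrm{rest})=\PHI(\YY_k^2\cdot\mathrm{rest})=S_k^2\,\PHI(\mathrm{rest})$, while the cross terms $\PHI(\XX_k\YY_k\cdot\mathrm{rest})$ and $\PHI(\YY_k\XX_k\cdot\mathrm{rest})$ vanish: in \eqref{R-cumulants}, every cumulant with $\XX_k$ at position $1$ must have all entries in $\calA_\XX$ (by $(\PHI|\TAU)$-freeness) and must contain another $\XX_k$ (by \eqref{&}), and neither occurs. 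Summing $2S_k^2\,\PHI(\mathrm{rest})$ over $k$ and the $2$-to-$1$ structure on the remaining positions, and identifying $n^{-m/2}\sum\PHI$ with $\PHI(\SSS_n^m)+o(1)$ by Lemma~\ref{L5.5}, the case (a) contribution is $2\PHI(\SSS_n^m)\,n^{-1}\sum_{k=1}^n S_k^2+o(1)$, which is precisely the desired main term; the restriction $k\notin\hat{J}$ changes the inner sum by only $O(n^{-1})$.

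The main obstacle is case (c). Fix $\hat{J}$ and every $\hat{\eps}$-label except $\hat{\eps}(1)$ and $\hat{\eps}(s_k)$. The cumulant containing $a_1=\ZZ_k^{\hat{\eps}(1)}$ in \eqref{R-cumulants} must have all its arguments in the algebra of $a_1$ (by $(\PHI|\TAU)$-freeness) and must contain a second copy of $\ZZ_k$ with matching label (by \eqref{&}); the only candidate is $a_{s_k}$, and Claim~\ref{Claim2}, through \eqref{&&&}, rules out enlarging this to a cumulant of size $\geq 3$. Hence the sole surviving cumulant is $R_2(\ZZ_k^{\hat{\eps}(1)},\ZZ_k^{\hat{\eps}(s_k)})$, nonzero only when $\hat{\eps}(1)=\hat{\eps}(s_k)$, with value $S_k^2$ independent of that common label. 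If $s_k<s_l$, the resulting $\TAU$-factor $\TAU(a_2\cdots a_{s_k-1})$ has $\ZZ_l^{\hat{\eps}(2)}$ as the only $l$-indexed entry and vanishes by \eqref{singleton2} applied to $\{\XX_j,\YY_j\}$. If $s_l<s_k$, the full factorization $R_2\cdot\TAU(a_2\cdots a_{s_k-1})\cdot\PHI(a_{s_k+1}\cdots a_{m+2})$ depends on $\hat{\eps}(1)$ only through $R_2=S_k^2$, since neither the $\TAU$- nor the $\PHI$-factor involves any $k$-indexed entry; the two matching tuples $(\hat{\eps}(1),\hat{\eps}(s_k))=(0,0)$ and $(1,1)$ therefore yield the same $\PHI$-value with opposite signs $(-1)^{2-\hat{\eps}(1)-\hat{\eps}(2)}$, while the non-matching tuples yield zero, so the partial sum over $\hat{\eps}(1),\hat{\eps}(s_k)$ is zero. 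Combining case (a), case (c), and the $o(1)$ remainder from $\mathcal{J}_{\geq 3}$ gives the claim.
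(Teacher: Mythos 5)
Your argument is correct, and its skeleton coincides with the paper's: expand into monomials indexed by maps $\hat J$, kill singleton indices by \eqref{singleton1}--\eqref{singleton2} via Lemma \ref{L4.5}, reduce to two-to-one maps by the counting bound of Lemma \ref{L5.5}, and split according to whether positions $1,2$ carry the same index, with the diagonal case producing the main term $2\PHI(\SSS_n^m)\sum_j S_j^2/n$ in both treatments. Where you genuinely diverge is in the off-diagonal cancellation, which is the heart of the lemma. The paper first polarizes, $(x-y)^2=x(x-y)+y(y-x)$, reduces by symmetry to $\PHI(\UU_n(\UU_n-\VV_n)\SSS_n^m)$, and then cancels the terms with $J(1)\ne J(2)$ by the bijection $\eps\mapsto\eps'$ that flips all labels before the repeated occurrence of $J(1)$, invoking the invariance of the joint $\TAU$-moments under swapping the two copies $\{\XX_j\}\leftrightarrow\{\YY_j\}$ (see \eqref{tmp***}). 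You instead keep the full signed expansion of $(\UU_n-\VV_n)^2$ and observe that, by \eqref{&}, \eqref{&&&} and the factorization \eqref{Kargin 2} applied to the doubled family, the only surviving terms have the two occurrences of the index $\hat J(1)$ carrying equal labels and contribute $S_{\hat J(1)}^2$ times $\TAU$- and $\PHI$-factors that do not depend on that common label; the sign $(-1)^{2-\hat\eps(1)-\hat\eps(2)}$ then cancels these terms in pairs over $\hat\eps(1)$, while in the sub-case $s_k<s_l$ each term already vanishes because the inner $\TAU$-factor contains $\ZZ_l^{\hat\eps(2)}$ as a singleton. Your cancellation thus uses only the sign structure and the label-independence of the Kargin factorization, and does not need the $\XX\leftrightarrow\YY$ symmetry of the $\TAU$-state; the price is a slightly heavier case analysis (the two sub-cases $s_k<s_l$ and $s_l<s_k$) that the paper's swap bijection handles uniformly.
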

\begin{proof}
  Since $(x-y)^2=x(x-y)+y(y-x)$, and the joint moments of $(\UU_n,\VV_n)$ are symmetric in $\UU_n,\VV_n$,
  it is enough to show that
   \begin{equation}
     \label{*}
     \limsup_{n\to\infty}\left|\PHI\left(\UU_n(\UU_n-\VV_n)\SSS_n^m\right)-
   \PHI\left(\SSS_n^{m}\right)\sum_{j=1}^nS_j^2/n\right|=0.
   \end{equation}
   By Lemma \ref{L5.5}, once we expand the sums in $\PHI\left(\UU_n^2\SSS_n^m -\VV_n\SSS_n^m-
\SSS_n^{m}\sum_{j=1}^nS_j^2/n\right)$, the only contributing terms come from the sum over
  the  two-to-one functions $J:\{1\dots m+2\}\to\{1\dots n\}$.
Therefore, it is enough to show that   before taking the limit, we have the following identity:
\begin{multline}\label{3.17}
 \frac1n\sum_{J\in\mathcal{J}_2}\sum_{\eps} \Big(  \PHI\left(\XX_{J(1)}\XX_{J(2)}\prod_{s=3}^{m+2}\ZZ_{J(s)}^{\eps(s)}\right)
 -\PHI\left(\XX_{J(1)}\YY_{J(2)}\prod_{s=3}^{m+2}\ZZ_{J(s)}^{\eps(s)}\right)\\
 -S_{J(1)}^2\delta_{J(1)=J(2)}\PHI\left(\prod_{s=3}^{m+2}\ZZ_{J(s)}^{\eps(s)}\right)\Big)=0.
\end{multline}
   Let $\mathcal{J}_*\subset\mathcal{J}_2$ denote the set of two-to-one
  functions with $J(1)=J(2)$. Expanding the products we see that for $J\in \mathcal{J}_*$ each term in \eqref{3.17}
  can be written as
  $$
 \PHI\left(\XX_{J(1)}\XX_{J(2)}\prod_{s=3}^{m+2}\ZZ_{J(s)}^{\eps(s)}\right)
  -\PHI\left(\XX_{J(1)}\YY_{J(2)}\prod_{s=3}^{m+2}\ZZ_{J(s)}^{\eps(s)}\right)
  -S_{J(1)}^2\PHI\left(\prod_{s=3}^{m+2}\ZZ_{J(s)}^{\eps(s)}\right).
  $$
Since $\YY_{J(2)}$ is a singleton, by Lemma
\ref{L4.5}, $\PHI\left(\XX_{J(1)}\YY_{J(2)}\prod_{s=3}^{m+2}\ZZ_{J(s+2)}^{\eps(s)}\right)=0$. The same lemma
gives
$$\PHI\left(\XX_{J(1)}\XX_{J(2)}\prod_{s=3}^{m+2}\ZZ_{J(s)}^{\eps(s)}\right)=\PHI\left(\XX_{J(1)}^2\prod_{s=3}^{m+2}\ZZ_{J(s)}^{\eps(s)}\right)
=S_{J(1)}^2\PHI\left(\prod_{s=3}^{m+2}\ZZ_{J(s)}^{\eps(s)}\right).$$
Thus
\begin{multline}\label{tmp*}
 \sum_{J\in\mathcal{J}_*}\sum_{\eps}\left(\PHI\left(\XX_{J(1)}\XX_{J(2)}\prod_{s=3}^{m+2}\ZZ_{J(s)}^{\eps(s)}\right)
  -\PHI\left(\XX_{J(1)}\YY_{J(2)}\prod_{s=3}^{m+2}\ZZ_{J(s)}^{\eps(s)}\right)\right)\\=
   \PHI\left(\SSS_n^{m}\right)\sum_{j=1}^nS_j^2.
\end{multline}
To end the proof, we need to show that the sum over $J\in\mathcal{J}_2\setminus\mathcal{J}_*$ is zero.
In fact, we observe that for each $J\in \mathcal{J}_2\setminus \mathcal{J}_*$,
\begin{equation}\label{tmp**}
\sum_{\eps}\left(\PHI\left(\XX_{J(1)}\XX_{J(2)}\prod_{s=3}^{m+2}\ZZ_{J(s)}^{\eps(s)}\right)
  -\PHI\left(\XX_{J(1)}\YY_{J(2)}\prod_{s=3}^{m+2}\ZZ_{J(s)}^{\eps(s)}\right)\right)=0.
\end{equation}
To see this, denote by $r>2$ the unique index with $J(1)=J(r)$. Given $\eps\in 2^{\{3,\dots,m+2\}}$, let
$$\eps'(s)=\begin{cases}
  1-\eps(s) & \mbox{ if $s<r$},\\
  \eps(s) & \mbox{ if $s\geq r$}.
\end{cases}$$
Clearly, the mapping $\eps\mapsto \eps'$ is a bijection of $\calE$. Therefore, \eqref{tmp**} follows from
\begin{equation}\label{tmp***}
\sum_{\eps}\left(\PHI\left(\XX_{J(1)}\XX_{J(2)}\prod_{s=3}^{m+2}\ZZ_{J(s)}^{\eps(s)}\right)
  =\sum_{\eps}\PHI\left(\XX_{J(1)}\YY_{J(2)}\prod_{s=3}^{m+2}\ZZ_{J(s)}^{\eps'(s)}\right)\right).
\end{equation}
The latter holds true because by Lemma \ref{L4.5}, for a fixed $\eps$, the left hand side of \eqref{tmp***} is
\begin{equation*}
  \PHI\left(\XX_{J(1)}^2\right)\TAU\left(\XX_{J(2)}\prod_{s=3}^{r-1}\ZZ_{J(s)}^{\eps(s)}\right)
  \PHI\left(\prod_{s=r+1}^{m+2}\ZZ_{J(s)}^{\eps(s)}\right),
\end{equation*}
while the right hand side of \eqref{tmp***} is
$$
\PHI\left(\XX_{J(1)}^2\right)\TAU\left(\YY_{J(2)}\prod_{s=3}^{r-1}\ZZ_{J(s)}^{\eps'(s)}\right)
  \PHI\left(\prod_{s=r+1}^{m+2}\ZZ_{J(s)}^{\eps(s)}\right).
$$
The two expressions are equal, because the joint (mixed) $\TAU$-moments of $\XX_1,\XX_2,\dots, \YY_1,\YY_2,\dots$
by construction do not change when we swap the roles of the
sequences $\{\XX_j\}$ and $\{\YY_j\}$. Of course, such a transformation converts
$\TAU\left(\XX_{J(2)}\prod_{s=3}^{r-1}\ZZ_{J(s)}^{\eps(s)}\right)$   into
 $\TAU\left(\YY_{J(2)}\prod_{s=3}^{r-1}\ZZ_{J(s)}^{\eps'(s)}\right)$.
\end{proof}

\begin{proof}[Proof of Theorem \ref{T2}]
Since convergence of moments is a metric convergence,
we use the standard lemma: to show convergence it suffices to show that every subsequence has a subsequence that converges to the same limit.

The joint $\TAU$-moments of $\UU_n,\VV_n,\SSS_n$ converge,
as the $\TAU$-moments of $\UU_n$ converge by assumption and \eqref{S-limit},  and $\UU_n,\VV_n$ are $\TAU$-free so their joint $\TAU$-moments are
uniquely determined from the moments of $\UU_n$ alone.

By Lemma \ref{L5}, from any subsequence $\UU_{n_k}$ by diagonal method
we can extract a further sub-subsequence such that
the joint $\PHI$-moments of $\UU_n$, $\VV_n$, and $\SSS_n$ converge along that
sub-subsequence. Taken together, the limits of these $\TAU$-moments and $\PHI$-moments
  define a pair of states on
 $\sC\langle\UU,\VV\rangle$, which we will denote again
 by $\TAU$ and $\PHI$. Since $\UU_n,\VV_n$ are $\TAU$-free and $(\PHI|\TAU)$-free
under the limit state $\UU,\VV$ are also
 $\TAU$-free and $(\PHI|\TAU)$-free. 
 From Lemma \ref{L6}, we see that the pair
$$\XX:=\UU/S,\; \YY:=\VV/S$$
 satisfies the assumptions of Theorem \ref{T1} with $a=b=0$.
 By Proposition \ref{L3}, this determines the $\PHI$-law of
 $\UU$ uniquely.
Therefore, the original sequence $\{\UU_n\}$ converges in $\PHI$-moments to $\UU$, and
the $\PHI$-law of $\frac1{\sqrt{S_1^2+\dots+S_n^2}}\sum_{j=1}^n\XX_j=
\sqrt{\frac{n}{S_1^2+\dots+S_n^2}}\UU_n$ converges in $\PHI$-moments to $\UU/S$.
Since the $\TAU$-law of $\UU/S$ is the dilations by $s/S$ of measure $
\nu$, we get formula \eqref{G-x}.
 \end{proof}
\begin{proof}[Proof of Corollary \ref{C-kargin}]
By Ref.  \cite{Kargin-07}, or by repeating the proof of Theorem \ref{T2}
in the special case when $\PHI=\TAU$ with Ref.  \cite{Bozejko-Bryc-04} used instead of Theorem \ref{T1},
we know that the $\TAU$-moments of
$\frac1{\sqrt{S_1^2+\dots+S_n^2}}\sum_{j=1}^n\XX_j$ converge to the semicircle law of variance $\sigma^2=s^2/S^2$.

Since the semicircle law has compact support, we can use Theorem \ref{T2};
the limiting distribution
is then given by \eqref{G_X}.
\end{proof}

\subsection*{Acknowledgements}
This research was partially supported by  the Taft Research Center,  KBN Grant No 1 PO3A 01330, and   NSF
grant \#DMS-0504198.
The second named author thanks Magda Peligrad for bringing Ref.  \cite{Kargin-07} to his attention. The paper benefited from comments by Luigi  Accardi.

\end{document}